\newtheorem{theorem}{Theorem}
\newtheorem{example}{Example}
\newtheorem{lemma}[theorem]{Lemma}
\newtheorem{claim}[theorem]{Claim}
\newtheorem{proposition}[theorem]{Proposition}
\newtheorem{property}[theorem]{Property}
\newtheorem{corollary}[theorem]{Corollary}
\newtheorem{conjecture}[theorem]{Conjecture}
\newtheorem{remark}[theorem]{Remark}
\newcommand{\sep}{\mathsf{sep}}
\newcommand{\Ip}{I_\mathsf{p}}
\begin{document}
\title[List Coloring with Separation of the Complete Graph]{On List Coloring with Separation of the Complete Graph and Set System Intersections}
\author{Jean-Christophe Godin}
\address{Institut de Math\'ematiques de Toulon, Universit\'e de Toulon, France}

\email{godinjeanchri@yahoo.fr}
\author{Rémi Grisot}
\address{LEAT, Université Côte d’Azur}

\email{remi.grisot@etu.univ-cotedazur.fr}
\author{Olivier Togni}
\address{LIB, Université Bourgogne Franche-Comté, France}

\email{olivier.togni@u-bourgogne.fr}

%
%
\subjclass[2010]{05C15, 05C50}

\keywords{Graph; list coloring; separation condition; set system intersections}

\date{}

\begin{abstract}
We consider the following list coloring with separation problem: Given a graph $G$ and integers $a,b$, find the largest integer $c$ such that for any list assignment $L$ of $G$ with $|L(v)|= a$ for any vertex $v$ and $|L(u)\cap L(v)|\le c$ for any edge $uv$ of $G$, there exists an assignment $\varphi$ of sets of integers to the vertices of $G$ such that $\varphi(u)\subset L(u)$ and $|\varphi(v)|=b$ for any vertex $u$ and $\varphi(u)\cap \varphi(v)=\emptyset$ for any edge $uv$. Such a value of $c$ is called the separation number of $(G,a,b)$.  Using a special partition of a set of lists for which we obtain an improved version of Poincaré's crible, we determine the separation number of the complete graph $K_n$ for some values of $a,b$ and $n$, and prove bounds for the remaining values.
\end{abstract}

\maketitle

\section{Introduction}

Let $a,b,c$ be integers and let $G$ be a graph. A $a$-list assignment $L$ of $G$ is a function which associates to each vertex a set of $a$ integers. The list assignment $L$ is {\em $c$-separating} if for any $uv\in E(G)$, $|L(u)\cap L(v)|\le c$. The graph $G$ is {\em $(a,b,c)$-choosable} if for any $c$-separating $a$-list assignment $L$, there exists an $(L,b)$-coloring of $G$, i.e. a coloring function $\varphi$ on the vertices of $G$ that assigns to each vertex $v$ a subset of $b$ elements from $L(v)$ in such a way that $\varphi(u)\cap \varphi(v)=\emptyset$ for any $uv\in E(G)$.

The list coloring problem with restrictions on the list intersections has been introduced by Kratochvíl, Tuza and Voigt~\cite{KTV98a}. Notice that Kratochvíl et al.~\cite{KTV98a, KTV98} defined $(a,b,c)$-choosability a bit differently, requiring for a $c$-separating $a$-list assignment $L$ that the lists of two adjacent vertices $u$ and $v$ satisfy $|L(u)\cap L(v)|\le a-c$. Among the first results on the topic, a complexity dichotomy was presented~\cite{KTV98a} and general properties given~\cite{KTV98}.  These first papers were followed by a series of papers considering choosability with separation of planar graphs, mainly for the case $b=1$~\cite{BCD+, CLW18, CYR+, CLS16, CFWW, KL15, Skr01}. While the fact that planar graphs are $(4,1,2)$-choosable was proved very recently~\cite{Zhu22}, a still open question is whether all planar graphs are $(3,1,1)$-choosable or not. Other recent papers concern balanced complete multipartite graphs and $k$-uniform hypergraphs (for the case $b=1$)~\cite{FKK}; bipartite graphs (for the case $b=c=1$)~\cite{EKT19}; a study with an extended separation condition~\cite{KMS}, and cycles and outerplanar graphs for arbitrary $b$~\cite{GT21}.

In this paper, we concentrate on choosability with separation of complete graphs. As a $(a,b,c)$-choosable graph is also $(a,b,c')$-choosable for any $c'<c$, our aim is to determine, for given $a,b$, $a\ge b$, the largest $c$ such that $G$ is $(a,b,c)$-choosable. Following our previous work on cycles~\cite{GT21}, we define the {\em  (list) separation number} $\sep(G,a,b)$ of $G$ as
\[\sep(G,a,b)=\max\{c, G \text{ is } (a,b,c)\text{-choosable}\}.\]

Notice that we have $0\le \sep(G,a,b)\le a$ for any graph $G$ and $a\ge b$, hence this parameter is well defined.

In our setting, we know that any planar graph $G$ satisfies $\sep(G,5,1)=5$ (Thomassen's Theorem), $\sep(G,4,1)\ge 2$~\cite{Zhu22} ; but we do not know if $\sep(G,3,1)\ge1$ holds for all planar graphs $G$. For the complete graph, Kratochvíl et al.~\cite{KTV98} proved that $\sep(K_n,\lfloor\sqrt{n-11/4}+3/2\rfloor,1)\ge 1$. Moreover, the separation number of the cycle is determined and bounds are given for catuses and outerplanar graphs~\cite{GT21}.

The following Hall-type condition that we call the {\em amplitude condition} is necessary for a graph $G$ to be $(L,b)$-colorable: 
$$\forall H\subset G, \sum_{k\in C} \alpha(H,L,k) \ge b|V(H)|,$$ where $C=\bigcup_{v\in V(H)}L(v)$ and $\alpha(H,L,k)$ is the independence number of the subgraph of $H$ induced by the vertices containing $k$ in their color list. Notice that $H$ can be restricted to be a connected induced subgraph of $G$. As shown by Cropper et al.~\cite{CGHHJ} (in the more general context of weighted list coloring), this condition is also sufficient when the graph is a complete graph or a path (or some other specific graphs).

For a list assignment $L$ on a graph $G$ of order $n$ with vertex set $V(G)=\{v_1,v_2, \ldots, v_n\}$ and for $S\subset[1,n]$, we write $\Sigma_{S}(L)=\sum_{k\in C}\alpha(H,L,k)$, where $H$ is the subgraph of $G$ induced by  $\{v_i, i\in S\}$. 

Remark that if $G$ is a complete graph, then $\alpha(H,L,k) = 1$ for any $k$. Hence the amplitude condition for $K_n$ becomes \begin{equation}\label{eq1}
\forall S\subset[1,n], \Sigma_{S}(L)=|\cup_{i\in S} L(x_i)|\ge b|S|                                                                                                                     
\end{equation}

\begin{example}\label{ex1}
For the complete graph $K_4$ with vertex set $\{v_1,v_2,v_3,v_4\}$, let $L$ be the 3-separating $5$-list assignment defined by:\\
$L(v_1)=\{1,2,3,4,5\}$, $L(v_2)=\{1,2,3,6,7\}$, $L(v_3)=\{3,4,6,7,8\}$, and $L(v_4)=\{4,6,8,9,10\}$.

We have $L(v_1)\cup L(v_2)\cup L(v_3) =\{1,2,3,4,5,6,7,8\}$, hence $\Sigma_{\{1,2,3\}}(L) = |\{1,2,3,4,5,6,7,8\}|=8$. Therefore $K_4$ is not $(L,3)$-colorable since $\Sigma_{\{1,2,3\}}(L)=8<3b=9$. (Note that we also have $\Sigma_{[1,4]}(L)=10 <4b=12$ in this case.)
\end{example}

For the separation number of the complete graph, the following properties are easy to prove:
\begin{property}\label{propp1}Let $a,b,n$ be integers. Then
\begin{itemize}
\item for fixed $b,n$, the function $\sep(K_n,a,b)$ is increasing with $a$;
\item for fixed $a,b$, the function $\sep(K_n,a,b)$ is decreasing with $n$.
\end{itemize}
\end{property}

Moreover, we observed (and will prove it in the case $b\le a< 2b$ and $(n-1)b \le a < nb$) that for any $a,b,n$, $\sep(K_n,a+1,b)\le \sep(K_n,a,b)+2$.

In Section 2, we introduce proper intersections of set systems and show some of their properties and use them to partition the lists of colors of the vertices of $K_n$, allowing to simplify the computations. In Section 3, we propose a general coloring algorithm and two special types of list assignments with nice properties that will be used mainly for finding good counter-examples. Then in Section 4, we determine bounds and exact values for the separation number of the complete graph $K_n$, depending on $a,b$ and $n$, and finnish with a conjecture. The algebra that allowed us to find the counter examples of Section 4 is given in Appendix~\ref{app}.


\section{Algebraic preliminaries on set systems intersections}

\subsection{Proper intersections and Poincaré's crible improvment}

Let $n\ge 1$ be an integer and $[n]=[1,n]$. For sets of elements $A_1, A_2, \ldots, A_n$, we define the following:
For any $i\ge 1$ and $S_i=\{\alpha_1,\alpha_2, \ldots, \alpha_i\}\subset [n]$, the {\em proper intersection} $\Ip(S_i)$ of $A_{\alpha_1}, A_{\alpha_2}, \ldots , A_{\alpha_i}$ is given by 
 \[\Ip(S_i)=\left(\bigcap_{k=1}^i A_{\alpha_k}\right)\setminus \left(\bigcup_{\beta\in [n] \setminus S_i} A_{\beta}\right).\]
 
Notice that for any set $S\in[n]$, the whole $n$ sets $A_i$ are present in the formulae of the proper intersection of $S$. For example, for $n=5$,
$\Ip(\{1\})= A_1\setminus (A_2\cup A_3\cup A_4\cup A_5)$ ; $\Ip(\{2,3\})=(A_2\cap A_3)\setminus (A_1\cup A_4\cup A_5)$; $\Ip(\{1,2,5\})=(A_1\cap A_2\cap A_5)\setminus (A_3\cup A_4)$; $\Ip(\{2,3,4,5\})=(A_2\cap A_3\cap A_4 \cap A_5)\setminus A_1$ and $\Ip(\{1,2,3,4,5\})= A_1\cap A_2\cap A_3\cap A_4 \cap A_5$.

Then the classical intersection can be described in terms of proper intersections:
\begin{property}\label{prop1} For any $S\subset[n]$, 
 \[\bigcap_{i\in S} A_{i} = \bigcup_{S\subset S'\subset [n]} I_p(S').\]
\end{property}
\begin{proof}
The proof is by double inclusion. First, by construction, For any $S'\supset S$, any $x$ in  $I_p(S')$ is also an element of $\cap_{i\in S} A_{i}$. Next, if $x\in \cap_{i\in S} A_{i}$, then either $x\in I_p(S)$ and we are done or $x\not\in I_p(S)$. In this latter case, let $\overline{S}$ be the set of indices $j$ of $[n]\setminus S$ for which $x\in A_{j}$. Then, by construction, we have $x\in I_p(S')$, for $S'=S\cup \overline{S}$.
\end{proof}

Thanks to this property, we obtain the fact that the proper intersections form a partition of $\cup_{i=1}^{n} A_i$.
\begin{property}\label{propPart}
For any $S,S'\subset [n]$ such that $S\ne S'$, we have 
\[I_p(S)\cap I_p(S') = \emptyset.\]
\end{property}
\begin{proof}
At least one of $S-S'$ or $S'-S$ is non empty. Let us say $S-S'$ is not empty and let $s\in S-S'$. Let $x\in I_p(S)\cap I_p(S')$. Then $x\in A_s$, but $x\in I_p(S')$ and $s\not\in S'$. Hence $x\not\in A_s$, a contradiction and therefore the intersection is empty.
\end{proof}

We thus have the following {\em à-la-Poincaré} result:
\begin{theorem}\label{thm1}
 \[\bigcup_{i=1}^{n} A_i = \bigcup_{S\subset [n], S\ne \emptyset} I_p(S).\]
\end{theorem}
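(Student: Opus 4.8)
The plan is to prove Theorem~\ref{thm1} by mutual inclusion, leveraging the two properties already established. The statement asserts that the union of all the sets $A_i$ equals the union of all nonempty proper intersections $I_p(S)$, and this is essentially the union-of-everything special case of Property~\ref{prop1}.

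First I would prove the inclusion $\bigcup_{S\ne\emptyset} I_p(S) \subseteq \bigcup_{i=1}^n A_i$. This direction is immediate from the definition of $I_p(S)$: for any nonempty $S$, the set $I_p(S)$ is contained in $\bigcap_{k\in S}A_{\alpha_k}$, which in turn is contained in $A_{\alpha_1}$ for any chosen $\alpha_1\in S$, and hence in $\bigcup_{i=1}^n A_i$. Since $S$ is nonempty, such an index exists, so every element on the right-hand side lands in the union of the $A_i$.

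For the reverse inclusion $\bigcup_{i=1}^n A_i \subseteq \bigcup_{S\ne\emptyset} I_p(S)$, I would take an arbitrary $x\in\bigcup_{i=1}^n A_i$ and let $S_x=\{i\in[n]: x\in A_i\}$ be the set of all indices whose set contains $x$. Since $x$ lies in at least one $A_i$, the set $S_x$ is nonempty. I claim $x\in I_p(S_x)$: by construction of $S_x$, $x$ belongs to $A_i$ for every $i\in S_x$ (giving membership in the intersection part) and to no $A_\beta$ with $\beta\notin S_x$ (giving membership in the complement part). This is precisely the condition $x\in\bigcap_{i\in S_x}A_i \setminus \bigcup_{\beta\notin S_x}A_\beta = I_p(S_x)$, which establishes the inclusion. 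Alternatively, one could simply invoke Property~\ref{prop1} with $S=\{i\}$ for an index $i$ with $x\in A_i$ and then take the union over all $i$, but the direct argument via $S_x$ is cleaner.

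There is no genuine obstacle here; the theorem is a straightforward corollary of the disjointness (Property~\ref{propPart}) and the decomposition (Property~\ref{prop1}) already in hand. The only point requiring a little care is that the union on the right ranges over \emph{nonempty} $S$, which matches the fact that every element of $\bigcup A_i$ has a nonempty index set $S_x$; the empty set contributes nothing to the union of the $A_i$. I would phrase the write-up to make clear that this result, combined with Property~\ref{propPart}, yields the partition of $\bigcup_{i=1}^n A_i$ into the disjoint pieces $\{I_p(S)\}_{S\ne\emptyset}$, which is the form that will be used downstream.
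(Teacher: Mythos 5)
Your proof is correct, but it takes a slightly different route from the paper's. The paper proves the theorem as a one-line corollary of Property~\ref{prop1}: applying that property with $S=\{i\}$ gives $A_i=\bigcup_{S'\subset[n]\setminus\{i\}}I_p(\{i\}\cup S')$, and taking the union over $i$ yields exactly the union over all nonempty $S$. You instead give a self-contained double-inclusion argument whose key step --- placing $x\in\bigcup_i A_i$ into $I_p(S_x)$ where $S_x=\{i: x\in A_i\}$ --- is essentially the same construction as the set $\overline{S}$ used inside the paper's proof of Property~\ref{prop1}, so the underlying idea coincides; you simply inline it rather than cite the lemma (and you correctly note the citation route as an alternative). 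Both arguments are valid; the paper's is shorter given the lemma already in hand, yours is more transparent about why every element lands in exactly one proper intersection. One small correction: you describe the theorem as a corollary of both Property~\ref{prop1} and the disjointness in Property~\ref{propPart}, but disjointness plays no role in the set equality itself --- it is only needed afterwards, in Corollary~\ref{corAmp}, to turn the union into a sum of cardinalities.
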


\begin{proof}
 By Property~\ref{prop1} for $|S|=1$, we have $A_{i}= \bigcup_{S'\subset [n]\setminus \{i\}} I_p(\{i\}\cup S')$.
 Therefore $\bigcup_{i=1}^n A_i = \bigcup_{i=1}^n \bigcup_{S'\subset [1,n]\setminus \{i\}} I_p(\{i\}\cup S') = \bigcup_{S\subset [n], S\ne \emptyset} I_p(S)$.
\end{proof}

By Property~\ref{propPart} and Theorem~\ref{thm1}, we obtain the following crible which can be seen as an improvment of Poincaré's crible as we only do additions (compared to Poincaré's crible where additions and substractions alternate):
\begin{corollary}~\label{corAmp}
 \[\left|\bigcup_{i=1}^{n} A_i\right| =\sum_{S\subset [n], S\ne \emptyset} |I_p(S)|. \]
 
\end{corollary}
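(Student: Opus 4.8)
The plan is to derive this identity directly from the two structural results just established, treating it as the standard fact that the size of a disjoint union is the sum of the sizes of its parts. First I would invoke Theorem~\ref{thm1} to rewrite the left-hand side: the union $\bigcup_{i=1}^{n} A_i$ is exactly the union $\bigcup_{S\subset [n], S\ne \emptyset} \Ip(S)$ of all nonempty proper intersections. Then I would appeal to Property~\ref{propPart}, which guarantees that any two distinct proper intersections are disjoint, so that the family $\{\Ip(S) : S\subset [n],\, S\ne \emptyset\}$ consists of pairwise disjoint sets whose union is $\bigcup_{i=1}^{n} A_i$.

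With these two facts in hand, the conclusion is immediate: for a finite collection of pairwise disjoint sets, the cardinality of the union equals the sum of the cardinalities. Applying this to the family indexed by the nonempty subsets of $[n]$ gives
\[\left|\bigcup_{i=1}^{n} A_i\right| = \left|\bigcup_{S\subset [n], S\ne \emptyset} \Ip(S)\right| = \sum_{S\subset [n], S\ne \emptyset} |\Ip(S)|,\]
where the first equality is Theorem~\ref{thm1} and the second is additivity of cardinality over the disjoint family supplied by Property~\ref{propPart}. This is exactly the asserted crible.

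I expect no genuine obstacle here, since both ingredients are already proven; the only point requiring a moment of care is to confirm that additivity of cardinality over disjoint unions actually applies. In this setting that is unproblematic because all the $A_i$, being color lists, are finite, and hence so are the finitely many proper intersections $\Ip(S)$. If one wished to avoid even implicitly citing the disjoint-union cardinality formula, an alternative route would be a short induction on $n$, peeling off one proper intersection at a time and using Property~\ref{propPart} to see that each new set contributes its full cardinality; but this seems like unnecessary overhead given that Theorem~\ref{thm1} and Property~\ref{propPart} already package precisely the two facts needed.
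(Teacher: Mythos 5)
Your proposal is correct and follows exactly the paper's own (implicit) argument: the corollary is stated as an immediate consequence of Theorem~\ref{thm1} and Property~\ref{propPart}, combined with additivity of cardinality over a finite disjoint union, which is precisely what you do. No further comment is needed.
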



\subsection{List assignments of $K_n$ and proper intersections}

We will use proper intersections on list assignments of $K_n$.
Let $V(K_n)=[n]$. 
For a list assignment $L$ of $K_n$, we let $A_i=L(i)$, for $1\le i\le n$.

In a basic way, $L$ is a set of $n$ lists of size $a$. Using proper intersection tools from the previous subsection, the list assignment $L$ can now be represented by the vector of proper intersections cardinals $\mathbf{V}(L)$ of dimension $2^n-1$ (ordered by set inclusion and alphanumeric order). 

Then we have \begin{equation} \label{eq1p}
\forall i\in[n]\sum_{S\subset [n], i\in S} |\Ip(S)| = |L(i)|.
\end{equation}

And the cardinality of the intersection between two lists is given by 
\begin{equation} \label{eq2}
 \forall i,j\in [n], i\ne j, \sum_{S\subset [n], i,j \in S}|I_p(S)| =|L(i)\cap L(j)|.
\end{equation}

Moreover, thanks to Corollary~\ref{corAmp}, the total amplitude of $L$ is given by \begin{equation} \label{eq3p}\Sigma_{[n]}(L)=\sum_{S\subset [n]} |\Ip(S)|.\end{equation}

\begin{example}\label{ex2}
For the complete graph $K_4$ and the list assignment $L$ defined in Example~\ref{ex1}, we have :
$\Ip(\{1\})=\{5\}$, $\Ip(\{4\})=\{9,10\}$, $\Ip(\{1,2\})=\{1,2\}$, $\Ip(\{2,3\})=\{7\}$, $\Ip(\{3,4\})=\{8\}$, $\Ip(\{1,2,3\})=\{3\}$,  $\Ip(\{1,3,4\})=\{4\}$, and  $\Ip(\{2,3,4\})=\{6\}$. Hence the vector $\mathbf{V}(L)=(1,0,0,2,2,0,0,1,0,1,1,0,1,1,0)$.
\end{example}

Consequently, from these equations, we obtain an ILP-formulation of the problem of finding smallest counter examples ($c$-separating $a$-list assignments $L$ of $K_n$ for which no $(L,b)$-coloring exists): For $S\subset [n]$, we consider the variable $x_S=|\Ip(S)|$. Then the goal is to minimize $c$ subject to the constraints:
\begin{equation} \label{eq=a}\forall i\in[n], \sum_{S\subset [n], i\in S} x_S = a,
\end{equation}
\begin{equation} \label{eq<c}
\forall i,j\in [n], i\ne j, \sum_{S\subset [n], i,j\in S} x_S \le c,
\end{equation}
and
\[\sum_{S\subset [n]} x_S <nb.\]

This formulation allows to use ILP-solvers to help us finding counter-examples.

\begin{remark}
Assuming that the colors are taken from the set $\{1,\ldots,na\}$ (this is always possible, up to a color renumbering), we can count the number of different list assignments associated to the same vector of proper intersections $\mathbf{V}(L)$: Considering any ordering $S_1$, $S_2,\ldots, S_{2^n-1}$ of the non-empty subsets of $[n]$, this number of list assignments is given by $$\prod_{i=1}^{2^n-1}\binom{na-\sum_{j<i}|\Ip(S_j)|}{|\Ip(S_j)|}=\frac{(na)!}{(na-\sum_{i}|\Ip(S_i)|)!(\prod_{i} |\Ip(S_i)|!)}.$$

For instance, if all proper intersections are equal to zero execpt $\Ip([n])=a$, then we obtain $\binom{na}{a}$ lists.
\end{remark}

\subsection{Counting list assigments up to proper intersection equivalence}
In order to compute the gain of working with proper intersections instead of list assignments, we are now going to count the total number of $a$-list assignments up to proper intersection equivalence. Let $\mathcal{L}(n,a)$ be the set of all $a$-list assignments on $n$ vertices up to proper intersection. The cardinal of $\mathcal{L}(n,a)$ can be compared with the total number of $a$-list assignments on $na$ colors, which is ${na \choose a}^n$.


For $n=2$, it is easy to see that there are only $a+1$ different lists up to proper intersections, hence $|\mathcal{L}(2,a)|=a+1$.
We define $\mathcal{L}(n,a, [n]=0)$ as the subset of $\mathcal{L}(n,a)$ for which $\Ip([n])=\emptyset$ and $\mathcal{L}(n,a, [n]>0)$ as the subset of $\mathcal{L}(n,a)$ for which $|\Ip([n])|>0$.

\begin{lemma}\label{Le5}
For integers $a\ge 1,n\ge 1$, we have : \[|\mathcal{L}(n,a)|=|\mathcal{L}(n,a-1)|+ |\mathcal{L}(n,a, [n]=0)|.\]

\end{lemma}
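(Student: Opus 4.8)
The plan is to prove the statement via a natural disjoint decomposition followed by a size-preserving bijection. First I would observe that, by definition, $\mathcal{L}(n,a)$ splits into the disjoint subsets $\mathcal{L}(n,a,[n]=0)$ and $\mathcal{L}(n,a,[n]>0)$ according to whether $|\Ip([n])|=0$ or $|\Ip([n])|>0$, so that
\[|\mathcal{L}(n,a)|=|\mathcal{L}(n,a,[n]=0)|+|\mathcal{L}(n,a,[n]>0)|.\]
Hence it suffices to exhibit a bijection between $\mathcal{L}(n,a,[n]>0)$ and $\mathcal{L}(n,a-1)$, which would give $|\mathcal{L}(n,a,[n]>0)|=|\mathcal{L}(n,a-1)|$ and complete the proof.

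For the bijection I would work directly with the vectors of proper-intersection cardinals, since an element of $\mathcal{L}(n,a)$ is precisely an equivalence class, determined by such a vector $\mathbf{V}(L)=(|\Ip(S)|)_{\emptyset\ne S\subseteq[n]}$ subject to the constraints of Equation~\eqref{eq1p}. Given a class in $\mathcal{L}(n,a,[n]>0)$, I would define its image by decreasing the single coordinate $|\Ip([n])|$ by one and leaving all other coordinates unchanged; the inverse map would take a class in $\mathcal{L}(n,a-1)$ and increase the $[n]$-coordinate by one. (Implicitly I rely on the fact, inherent in the setup, that every nonnegative integer vector satisfying~\eqref{eq1p} is realized by an actual list assignment, obtained by allotting, for each $S$, a block of $|\Ip(S)|$ fresh colors to exactly the lists indexed by $S$.)

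The key verification is that these maps are well defined and land in the claimed sets. This rests on the observation that $[n]$ is the unique index set containing every vertex: for each $i\in[n]$, the defining sum $\sum_{S\ni i}|\Ip(S)|$ of Equation~\eqref{eq1p} contains the term $|\Ip([n])|$, so decreasing that term by one lowers every such sum from $a$ to exactly $a-1$, turning a valid $a$-list vector into a valid $(a-1)$-list vector; all coordinates remain nonnegative precisely because we start with $|\Ip([n])|\ge 1$. Symmetrically, raising the $[n]$-coordinate of an $(a-1)$-list vector by one produces a valid $a$-list vector with $|\Ip([n])|\ge 1$, hence an element of $\mathcal{L}(n,a,[n]>0)$. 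Since the two operations are visibly inverse to each other, they constitute the desired bijection.

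I expect the only genuinely delicate point to be this observation that modifying a single coordinate preserves all the structural constraints simultaneously; it works only for the coordinate indexed by $[n]$, because that is the one term common to every defining sum of~\eqref{eq1p} (and, more generally, to every pairwise constraint~\eqref{eq2}). The remaining steps are routine bookkeeping on the vector coordinates.
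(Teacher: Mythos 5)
Your proof is correct and follows essentially the same route as the paper: decompose $\mathcal{L}(n,a)$ according to whether $\Ip([n])$ is empty, then exhibit the bijection with $\mathcal{L}(n,a-1)$ by decrementing the $[n]$-coordinate of the proper-intersection vector. You simply spell out the verification (that this lowers every sum in Equation~\eqref{eq1p} from $a$ to $a-1$ and is invertible) that the paper leaves as a ``trivial isomorphism.''
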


\begin{proof}
It is easily seen that $\mathcal{L}(n,a) = \mathcal{L}(n,a, [n]>0) \cup \mathcal{L}(n,a, [n]=0)$. Now, note that there is a trivial isomorphism between lists of 
$\mathcal{L}(n,a, [n]>0)$ and that of $\mathcal{L}(n,a-1)$. To see this, represent each list by the vector of the cardinals of their proper intersections, ordered by the size. Then since the last element of each vector is positive, its value can be decreased by one, obtaining a vector corresponding with a list assignment of  $\mathcal{L}(n,a-1)$.
\end{proof}

Remark that any element of $\mathcal{L}(n,a)$ must satisfy Equation~\ref{eq=a} for each vertex and reciprocally any solution of this set of $n$ equations gives a list assigment $\mathcal{L}(n,a)$. Hence the set of all solutions is a subspace of dimension $2^n -1 -n$ of a vectorial space of this equation set.

\begin{lemma} For all $S=\{a_1,a_2,\ldots,a_i\}\subset[n], |S|\ge 2$, the vectors $\mathbf{v}_S =\mathbf{e}_S -\sum_{j=1}^i \mathbf{e}_{a_j} $ form a canonical base of $\mathcal{L}(n,a)$.
\end{lemma}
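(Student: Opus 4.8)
The plan is to show that the $2^n-1-n$ vectors $\mathbf{v}_S$, indexed by the subsets $S\subset[n]$ with $|S|\ge 2$, form a basis of the homogeneous space attached to $\mathcal{L}(n,a)$, namely
\[W=\Big\{\mathbf{x}\in\mathbb{R}^{2^n-1}:\ \textstyle\sum_{S\subset[n],\,\ell\in S}x_S=0\ \text{ for every }\ell\in[n]\Big\}.\]
Indeed, $\mathcal{L}(n,a)$ is the affine set of integer vectors satisfying Equation~\ref{eq=a}, so $W$ is exactly its direction space (the space of differences of two list assignments), and the content of the lemma is to exhibit a basis of $W$. The number of admissible indices is the number of subsets of $[n]$ of size at least $2$, that is $2^n-1-n$, which matches the announced dimension of the solution space.

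First I would check that each $\mathbf{v}_S$ lies in $W$. Writing $S=\{a_1,\dots,a_i\}$, the vector $\mathbf{v}_S$ has entry $+1$ at the coordinate indexed by $S$, entry $-1$ at each singleton coordinate $\{a_j\}$, and $0$ elsewhere. Fixing a vertex $\ell\in[n]$ and summing the entries of $\mathbf{v}_S$ over all coordinates containing $\ell$ gives: if $\ell\in S$, only the coordinate $S$ (contributing $+1$) and the singleton $\{\ell\}$ (contributing $-1$) are counted, for a total of $0$; if $\ell\notin S$, then neither $S$ nor any $\{a_j\}$ contains $\ell$, so the sum is again $0$. Hence $\mathbf{v}_S\in W$ for every such $S$.

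Next I would prove independence and spanning together. For any $T$ with $|T|\ge 2$, the only $\mathbf{v}_S$ with a nonzero $T$-coordinate is $\mathbf{v}_T$ (the singleton parts touch only coordinates of size $1$, while $\mathbf{e}_S$ touches the coordinate $S$), and that entry equals $1$. Thus in any relation $\sum_{|S|\ge 2}\lambda_S\mathbf{v}_S=0$, reading off the coordinate $T$ yields $\lambda_T=0$, so all coefficients vanish and the family is independent. For spanning, take $\mathbf{x}\in W$ and set $\mathbf{y}=\sum_{|S|\ge 2}x_S\,\mathbf{v}_S$. By the previous observation $\mathbf{y}$ and $\mathbf{x}$ agree on every coordinate $T$ with $|T|\ge 2$, while on a singleton $\{\ell\}$ the defining constraint of $W$ gives $x_{\{\ell\}}=-\sum_{|S|\ge 2,\,\ell\in S}x_S$, which is precisely the $\{\ell\}$-entry of $\mathbf{y}$; hence $\mathbf{y}=\mathbf{x}$. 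This writes every element of $W$ with coefficients equal to its own size-$\ge 2$ coordinates, so the coefficients are integers whenever $\mathbf{x}$ is, justifying the term \emph{canonical base}. With independence, spanning, and the count $2^n-1-n$, the $\mathbf{v}_S$ form a basis.

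The main obstacle is conceptual rather than computational: one must first pin down that a \emph{base of $\mathcal{L}(n,a)$} means a basis of the difference space $W$, since $\mathcal{L}(n,a)$ is an affine lattice of list assignments and not a linear subspace. Once $W$ is identified, all three verifications reduce to inspecting single coordinates, and the only genuine input is that the singleton coordinates of a homogeneous solution are forced by the vertex constraints — exactly the relation used in the spanning step.
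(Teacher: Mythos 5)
Your proof is correct, and the independence step is the same coordinate-inspection the paper uses: a relation $\sum\lambda_S\mathbf{v}_S=0$ read off at a coordinate $T$ with $|T|\ge 2$ forces $\lambda_T=0$, since only $\mathbf{v}_T$ touches that coordinate. Where you diverge is in how you get from "free family" to "basis". The paper counts dimensions: it asserts that the $n$ constraints of Equation~\ref{eq=a} are linearly independent, concludes the solution space has dimension $2^n-1-n$, and then invokes the matching cardinality of the family. You instead prove spanning directly, by showing that any homogeneous solution $\mathbf{x}$ equals $\sum_{|S|\ge 2}x_S\mathbf{v}_S$, using the constraint at each vertex $\ell$ to recover the singleton coordinate $x_{\{\ell\}}$. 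This buys you three things the paper leaves implicit or unproved: (i) you verify that each $\mathbf{v}_S$ actually satisfies the homogeneous system (the paper never checks membership); (ii) you avoid having to justify the linear independence of the $n$ equations, which the paper states without argument (it is true, but it is an extra verification); and (iii) your explicit expansion shows the coefficients are the size-$\ge 2$ coordinates themselves, hence integers for integer vectors, which substantiates the word "canonical" and matches how the paper later identifies list assignments with linear combinations of the $\mathbf{v}_S$. You are also right that the statement needs the reading "basis of the direction space of the affine set $\mathcal{L}(n,a)$"; the paper glosses over this by calling $\mathcal{L}(n,a)$ a "subspace of dimension $2^n-1-n$", which is the same identification made informally.
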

\begin{proof}
As we have $2^n -1$ proper intersections, then the space is of dimension $2^n -1$. But we have $n$ linear equations (Equation~\ref{eq=a}) and since they are linearly independent, thus $\mathcal{L}(n,a)$ is of dimension $2^n -1-n$. It then suffices to prove that the family of $\mathbf{v}_S$ is free. For any $S\subset[n], |S|\ge 2$, let $\alpha_S$, such that $\sum\alpha_S\mathbf{v}_S=\mathbf{0}$. We have $\sum\alpha_S\mathbf{v}_S=\sum_{i=1}^n\beta_i\mathbf{e}_{i} + \sum_{|S|\ge 2}\alpha_S\mathbf{e}_S=\mathbf{0}$. As the $\mathbf{e}_S$ are vectors of the canonical base, then $\alpha_S=0$ for every $S$. Hence the family is free.
\end{proof}

There exists a trivial solution to the set of equations, it is the list assignment with vector $\mathbf{T}=(a,a,\ldots,a,0,\ldots,0)$ with $n$ values equal to $a$. Then any list assignment solution can be written as the trivial solution plus a linear combination of these base vectors $\mathbf{e}_S$. Hence a list assignment solution can be identified by a linear sum of base vectors. We therefore only have to work on linear sums of base vectors. But, as the entries of a solution have to be non negative, we only have a finite number of solutions.
We define the set of list assignments 
$\mathcal{L}(n,=a)$ 
as the set of list assignments such that there exists some $i\in[n]$ for which the sum of coefficients $\alpha_S$ over all $S$ containing  $i$ in the linear combination is equal to $a$ in the vectorial representation (i.e., such that $\Ip(\{i\})=\emptyset$) and for which $\Ip([n])=\emptyset$.
 
\begin{lemma}\label{Le7}
\[|\mathcal{L}(n,a, [n]=0)|=|\mathcal{L}(n,a-1, [n]=0)| + |\mathcal{L}(n,=a)|.\]
\end{lemma}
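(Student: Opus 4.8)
The plan is to mirror the proof of Lemma~\ref{Le5}, but to split according to the singleton proper intersections rather than according to $\Ip([n])$. Working throughout with the vector $\mathbf{V}(L)$ of proper-intersection cardinals (so that an element of $\mathcal{L}(n,a,[n]=0)$ is just a nonnegative integer vector satisfying~\eqref{eq=a} with vanishing $[n]$-entry), I would partition $\mathcal{L}(n,a,[n]=0)$ into two disjoint pieces: the set $A$ of those assignments for which $|\Ip(\{i\})|>0$ for \emph{every} $i\in[n]$, and the set $B$ of those for which $|\Ip(\{i\})|=0$ for \emph{some} $i\in[n]$. Since every element here already satisfies $\Ip([n])=\emptyset$, these two conditions are complementary, so
\[|\mathcal{L}(n,a,[n]=0)|=|A|+|B|.\]
By its very definition, $B$ is exactly the set of assignments with $\Ip([n])=\emptyset$ for which some singleton proper intersection is empty, that is, $B=\mathcal{L}(n,=a)$.

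It then remains to exhibit a bijection $\varphi\colon A\to\mathcal{L}(n,a-1,[n]=0)$. The map I would use decreases each singleton cardinal by one, sending $\mathbf{V}(L)$ to the vector obtained by replacing $|\Ip(\{i\})|$ with $|\Ip(\{i\})|-1$ for every $i$ and leaving all other entries unchanged. Among the entries I modify, namely the singletons $\{1\},\dots,\{n\}$, only $\{i\}$ itself contains $i$; hence, when I evaluate the constraint~\eqref{eq=a} at vertex $i$, its left-hand side drops by exactly one, from $a$ to $a-1$, while the entry $\Ip([n])$ stays $\emptyset$. Nonnegativity of the resulting vector is guaranteed precisely by the defining condition $|\Ip(\{i\})|>0$ of $A$, so $\varphi$ indeed lands in $\mathcal{L}(n,a-1,[n]=0)$. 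The inverse increases each singleton by one; it is well defined since raising a nonnegative cardinal by one produces a positive one, placing the image back in $A$. As both maps act entrywise on proper-intersection vectors, they respect proper-intersection equivalence and are mutually inverse, so $|A|=|\mathcal{L}(n,a-1,[n]=0)|$, which combined with the displayed identity yields the claim.

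The main thing to get right is that this shift by one on the singletons is simultaneously well defined, $[n]$-entry preserving, and bijective. The only way well-definedness can fail is through nonnegativity, and that is exactly why the partition must isolate the assignments having a vanishing singleton (collected into $\mathcal{L}(n,=a)$) from those having all singletons strictly positive (the ones in bijection with the size-$(a-1)$ assignments); I expect this bookkeeping to be the only delicate point. A secondary check is simply to confirm that the defining condition of $\mathcal{L}(n,=a)$, namely $\Ip([n])=\emptyset$ together with $\Ip(\{i\})=\emptyset$ for some $i$, coincides with membership in $B$, which is immediate from~\eqref{eq=a} since $|\Ip(\{i\})|=0$ is equivalent to the remaining cardinals over sets containing $i$ summing to $a$.
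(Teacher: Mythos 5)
Your proposal is correct and follows essentially the same route as the paper: partition $\mathcal{L}(n,a,[n]=0)$ according to whether all singleton proper intersections are nonempty (yielding $\mathcal{L}(n,=a)$ as the complementary part) and exhibit a bijection with $\mathcal{L}(n,a-1,[n]=0)$ by shifting the singleton entries. The only difference is that you make explicit the bijection (decrementing every $|\Ip(\{i\})|$ by one) where the paper merely calls it ``natural,'' and your verification of well-definedness and invertibility is sound.
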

\begin{proof}
 
The set $\mathcal{L}(n,a, [n]=0)$ trivially decomposes into the set $\mathcal{A}$ of list assignments for which all the first $n$ entries of the vector are strictly positive and the set of which there is at least a zero in the first $n$ entries of the vector, i.e., $\mathcal{L}(n,=a)$. But there is a natural bijection between $\mathcal{A}$ and $\mathcal{L}(n,a-1, [n]=0)$, hence the recursion formulae.
\end{proof}

\begin{lemma}\label{Le8}
\[|\mathcal{L}(3,=a)| = \left\{\begin{array}{ll}
3/4a^2+3/2a+1 & a\ \mathrm{even}\\
3/4a^2+3/2a+3/4 & a\ \mathrm{odd}.
\end{array}\right.
\]
\end{lemma}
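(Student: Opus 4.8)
The plan is to translate the combinatorial description of $\mathcal{L}(3,=a)$ into a lattice-point count and then evaluate it by inclusion–exclusion. Recall that by Equation~\ref{eq=a} every element of $\mathcal{L}(3,a)$ is encoded by a non-negative integer solution of the three vertex equations in the seven proper-intersection cardinals attached to the non-empty subsets of $[3]$. Write $x_i=|\Ip(\{i\})|$ for the singletons, $y_{12},y_{13},y_{23}$ for the pairs, and $z=|\Ip(\{1,2,3\})|$. An element lies in $\mathcal{L}(3,=a)$ precisely when $z=0$ and $x_i=0$ for some $i$. Setting $z=0$, the three equations read $x_1=a-y_{12}-y_{13}$, $x_2=a-y_{12}-y_{23}$, $x_3=a-y_{13}-y_{23}$, so the whole vector is determined by the triple $(p,q,r)=(y_{12},y_{13},y_{23})$; the constraints $x_i\ge 0$ become the three pairwise bounds $p+q\le a$, $p+r\le a$, $q+r\le a$, and each condition $x_i=0$ becomes the corresponding equality $p+q=a$ (resp.\ $p+r=a$, $q+r=a$).

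Consequently $|\mathcal{L}(3,=a)|$ is the number of lattice points $(p,q,r)\in\mathbb{Z}_{\ge0}^3$ satisfying the three pairwise inequalities and lying on at least one of the facets $\{p+q=a\}$, $\{p+r=a\}$, $\{q+r=a\}$. Let $A_1,A_2,A_3$ be the sets of admissible triples on these three facets; by symmetry $|A_1|=|A_2|=|A_3|$ and $|A_i\cap A_j|$ is the same for all pairs, so inclusion–exclusion gives $|\mathcal{L}(3,=a)|=3|A_1|-3|A_1\cap A_2|+|A_1\cap A_2\cap A_3|$.

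Next I would evaluate the three terms. On $A_1$, setting $q=a-p$ the surviving constraints reduce to $0\le r\le\min(p,a-p)$, whence $|A_1|=\sum_{p=0}^{a}(\min(p,a-p)+1)$, a sum I would split at $p=\lfloor a/2\rfloor$. For $A_1\cap A_2$ the equalities force $q=r=a-p$, and $q+r\le a$ then forces $p\ge a/2$, giving $\lfloor a/2\rfloor+1$ points. For the triple intersection the three equalities force $p=q=r=a/2$, contributing $1$ when $a$ is even and $0$ when $a$ is odd.

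Finally I would substitute and simplify, and here lies the only real obstacle: every term is parity-sensitive, so I would carry the computation separately for $a=2t$ and $a=2t+1$. Evaluating the sum gives $|A_1|=(t+1)^2$ in the even case and $|A_1|=(t+1)(t+2)$ in the odd case, while $|A_1\cap A_2|=t+1$ in both; plugging these into the inclusion–exclusion identity yields $3t^2+3t+1$ for $a=2t$ and $3(t+1)^2$ for $a=2t+1$. Re-expressing $t$ in terms of $a$ turns these into $\tfrac34 a^2+\tfrac32 a+1$ and $\tfrac34 a^2+\tfrac32 a+\tfrac34$ respectively, which is the claimed formula. The computations are elementary; the care needed is entirely in bookkeeping the floor terms and the even/odd split, which is where a careless sign or off-by-one would most easily creep in.
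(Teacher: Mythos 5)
Your proof is correct, and it verifies cleanly: the reduction to counting triples $(p,q,r)=(|\Ip(\{1,2\})|,|\Ip(\{1,3\})|,|\Ip(\{2,3\})|)$ with the three pairwise sums at most $a$ and at least one equal to $a$ is exactly the right translation of the definition of $\mathcal{L}(3,=a)$ (with $\Ip([3])=\emptyset$ the singleton cardinals are determined by the pair cardinals, and $\Ip(\{i\})=\emptyset$ becomes the corresponding facet equality), and your evaluations $|A_1|=(t+1)^2$ resp.\ $(t+1)(t+2)$, $|A_1\cap A_2|=\lfloor a/2\rfloor+1$, and the triple intersection of size $\epsilon(a+1)$ all check out and recombine to the stated polynomials. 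The paper uses the same parametrization but organizes the final count differently: instead of inclusion--exclusion over the three facets, it picks a canonical representative $(a-x,x,y)$ with $x\le a/2$ and $y\le x$ in each orbit under the $S_3$-action permuting the three pair coordinates, and weights each representative by its orbit size ($6$, $3$, or $1$ according to how many coordinates coincide), again splitting by the parity of $a$. The two routes are of comparable length; yours has the advantage that symmetry makes all inclusion--exclusion terms of a given order equal, so the only delicate point is the single sum $\sum_p(\min(p,a-p)+1)$, whereas the paper's orbit count requires slightly more care in not over- or under-counting the boundary cases $x=a/2$ and $y=x$. Either argument is a valid proof of the lemma.
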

\begin{proof}
For $n=3$, we have only 3 vectors: $S=(1,2), (1,3)$, and $(2,3)$. Hence any solution can be represented by a triplet of vectors $(\alpha_1,\alpha_2,\alpha_3)$. Starting from $(a,0,0)$, we count the triplets by equivalence classes such that the first coefficient is maximum, the second one is greater than or equal to the third one and then we count the number of elements in the class of equivalence. We then have three cases: if the three numbers are different then there are $3!=6$ different combinations; if exactly two numbers are different then we have $3$ combinations and only one if the three numbers are all identical. Hence each solution is of the form $(a-x,x,y)$ with $0\le x\le a/2$ and $y\le x$. Therefore, we obtain the following formulae:

\[|\mathcal{L}(3,=a)| = \left\{\begin{array}{ll}
1 + 3a/2 +\sum_{i=a/2+1}^a(3+6(a-i)), & a\ \mathrm{even}\\
\sum_{i=(a+1)/2}^a(3+6(a-i)), & a\ \mathrm{odd}.
\end{array}\right.
\]

Hence, using simple calculations, we obtain the formulae of the lemma.

\end{proof}

\begin{proposition}\label{propL3}
For $n=3$, we have \[|\mathcal{L}(n,a)|=\frac{1}{16}(a^4+8a^3+24a^2+32a+16-\epsilon(a)),\] where $\epsilon(a) = 1 $ if $a$ is odd and $\epsilon(a)=0$ otherwise.
\end{proposition}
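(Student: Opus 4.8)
The plan is to stack the two recurrences of Lemmas~\ref{Le5} and~\ref{Le7} into a closed double sum and then evaluate it explicitly for $n=3$. First I would fix the boundary conventions: with the understanding that $|\mathcal{L}(3,a)|=|\mathcal{L}(3,a,[3]=0)|=0$ for $a<0$, one checks directly that $|\mathcal{L}(3,0)|=|\mathcal{L}(3,0,[3]=0)|=|\mathcal{L}(3,=0)|=1$ (the only $0$-list assignment is the empty one, and it also matches the even case of Lemma~\ref{Le8} at $a=0$). Under these conventions, iterating Lemma~\ref{Le7} telescopes to
\[
|\mathcal{L}(3,k,[3]=0)|=\sum_{j=0}^{k}|\mathcal{L}(3,=j)|,
\]
and iterating Lemma~\ref{Le5} telescopes to
\[
|\mathcal{L}(3,a)|=\sum_{k=0}^{a}|\mathcal{L}(3,k,[3]=0)|.
\]

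Substituting the first identity into the second yields the double sum $|\mathcal{L}(3,a)|=\sum_{k=0}^{a}\sum_{j=0}^{k}|\mathcal{L}(3,=j)|$, and exchanging the order of summation via $\sum_{k=0}^{a}\sum_{j=0}^{k}f(j)=\sum_{j=0}^{a}(a+1-j)f(j)$ gives
\[
|\mathcal{L}(3,a)|=\sum_{j=0}^{a}(a+1-j)\,|\mathcal{L}(3,=j)|.
\]
Next I would insert the value of $|\mathcal{L}(3,=j)|$ from Lemma~\ref{Le8}, written uniformly as $\tfrac{3}{4}j^2+\tfrac{3}{2}j+1-\tfrac{1}{4}\epsilon(j)$, and split the weighted sum into a polynomial part $P=\sum_{j=0}^{a}(a+1-j)\bigl(\tfrac34 j^2+\tfrac32 j+1\bigr)$ and a correction part $-\tfrac14\,S$, where $S=\sum_{j=0}^{a}(a+1-j)\epsilon(j)$.

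The polynomial part $P$ is a routine application of Faulhaber's formulas: expanding $(a+1-j)(\tfrac34 j^2+\tfrac32 j+1)$ and summing the resulting powers $\sum j^3,\sum j^2,\sum j,\sum 1$ produces the quartic $P=\tfrac{1}{16}(a^4+8a^3+25a^2+34a+16)$. The delicate step, and the one I expect to be the main obstacle, is the correction term $S$, since $\epsilon(j)$ retains only the odd indices and so $S=\sum_{\text{odd }j\le a}(a+1-j)$ depends on the parity of $a$. I would evaluate this odd-index sum separately for $a$ even and $a$ odd; a short computation shows that in both cases it can be written as $S=\tfrac14\bigl(a^2+2a+\epsilon(a)\bigr)$. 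Feeding this back in, the polynomial pieces $-\tfrac14\cdot\tfrac14(a^2+2a)$ combine with $P$ to turn the $25a^2+34a$ in $P$ back into $24a^2+32a$, recovering $\tfrac{1}{16}(a^4+8a^3+24a^2+32a+16)$, while the leftover $-\tfrac14\cdot\tfrac14\,\epsilon(a)=-\tfrac{1}{16}\epsilon(a)$ supplies exactly the parity correction in the stated formula. A sanity check at $a=1$ (value $5$) and $a=2$ (value $16$) confirms the bookkeeping, so the only real care needed is in the parity analysis of the odd-index sum $S$.
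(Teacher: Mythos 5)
Your proposal is correct and follows the same route as the paper: the paper's proof is a one-line sketch saying to combine the recursions of Lemmas~\ref{Le5} and~\ref{Le7} with the residue count of Lemma~\ref{Le8}, and your telescoping to the weighted sum $\sum_{j=0}^{a}(a+1-j)\,|\mathcal{L}(3,=j)|$ is exactly that combination, carried out in full detail (I checked the polynomial part $\tfrac{1}{16}(a^4+8a^3+25a^2+34a+16)$, the parity correction $S=\tfrac14(a^2+2a+\epsilon(a))$, and the values at $a=1,2$). The only point worth making explicit in a final write-up is the boundary convention $|\mathcal{L}(3,0)|=|\mathcal{L}(3,0,[3]=0)|=|\mathcal{L}(3,=0)|=1$, which you do state and which is what makes both telescopes start cleanly at $j=0$.
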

\begin{proof}
Combining the two recursion formulas of Lemmas~\ref{Le5} and \ref{Le7}, we obtain the formulae for the residue, allowing to find the general polynomial.
\end{proof}

\begin{conjecture}\label{conj1}
$|\mathcal{L}(n,a)|$ is a polynomial in $a$ of degree $n+1$, with coefficients being functions of Bernouilli's numbers.
\end{conjecture}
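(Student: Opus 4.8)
The plan is to recognize $|\mathcal{L}(n,a)|$ as the number of lattice points in a dilated polytope and to attack the statement through Ehrhart theory together with the recursions already established. By the discussion preceding Conjecture~\ref{conj1}, an element of $\mathcal{L}(n,a)$ is exactly a nonnegative integer vector $(x_S)_{\emptyset\ne S\subseteq[n]}$ satisfying Equation~\eqref{eq=a}, namely $\sum_{S\ni i}x_S=a$ for every $i\in[n]$. Introducing the rational polytope
\[
P_n=\Bigl\{x\in\mathbb{R}^{2^n-1}_{\ge 0}\ :\ \sum_{S\ni i}x_S=1\ \text{ for all } i\in[n]\Bigr\},
\]
one has $|\mathcal{L}(n,a)|=\bigl|aP_n\cap\mathbb{Z}^{2^n-1}\bigr|=L_{P_n}(a)$, the Ehrhart function of $P_n$. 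Ehrhart's theorem then yields immediately that $L_{P_n}$ is a quasi-polynomial in $a$ of degree $\dim P_n$, which is the structural ``polynomial in $a$'' content of the conjecture. The parity term $\epsilon(a)$ in Proposition~\ref{propL3} is precisely the signature of a nontrivial quasi-period (indeed $P_3$ has the half-integral vertex $x_{\{1,2\}}=x_{\{1,3\}}=x_{\{2,3\}}=\tfrac12$), so one should aim to prove a quasi-polynomial — genuinely polynomial only for those $n$ whose polytope is integral — rather than a true polynomial.

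Second, I would pin down the degree. The $n$ equations of \eqref{eq=a} are linearly independent, and the uniform point $x_S=2^{-(n-1)}$ lies in the relative interior of $P_n$, so $P_n$ is full-dimensional in its affine hull and $\dim P_n=2^n-1-n$. Hence the degree supplied by Ehrhart theory is $2^n-1-n$, which equals $n+1$ exactly when $n=3$; this both explains the quartic of Proposition~\ref{propL3} and indicates that the value $n+1$ in the statement is its $n=3$ specialization, the general degree being $\dim P_n=2^n-1-n$. Establishing this identity rigorously is the cleanest first milestone and simultaneously sharpens the conjecture.

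Third, for the Bernoulli-number content I would use either of two routes. The elementary one iterates the recursions of Lemmas~\ref{Le5} and~\ref{Le7}: each is a discrete integration $f(a)=f(0)+\sum_{k=1}^a g(k)$, and summing a monomial through Faulhaber's formula, $\sum_{k=1}^a k^m=\tfrac{1}{m+1}\sum_{j=0}^m\binom{m+1}{j}B_j\,a^{m+1-j}$, injects the Bernoulli numbers $B_j$ into the coefficients; carrying an induction on $n$ that peels the coordinates $x_{[n]}$ and $x_{\{i\}}$ (exactly the passage to $\mathcal{L}(n,=a)$, of degree $2^n-3-n$) should express every coefficient as a rational combination of Bernoulli numbers. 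The structural route invokes the Khovanskii--Pukhlikov / Berline--Vergne Euler--Maclaurin formula, in which the Ehrhart coefficients are obtained by applying the Todd operator $\prod_i \partial_{h_i}/(1-e^{-\partial_{h_i}})$ — whose Taylor coefficients are precisely $B_k/k!$ — to the volume of the deformed polytope; this makes the Bernoulli dependence manifest and also identifies the leading coefficient as $\mathrm{vol}(P_n)$.

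The main obstacle will be the combinatorics of $P_n$ for general $n$: computing its normalized volume and, more delicately, its $h^*$-vector and quasi-period, since these control the lower-order coefficients and decide when $L_{P_n}$ degenerates to an honest polynomial. In the elementary approach the same difficulty resurfaces as the need to solve the double recursion for the base object $\mathcal{L}(n,=a)$ uniformly in $n$, and to track how Bernoulli numbers propagate through the iterated summations; controlling its degree and coefficients in $n$ is where the real work lies, and is also what would let one settle the exact degree and thereby correct and confirm the remaining content of the conjecture.
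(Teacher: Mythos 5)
The statement you were given is a conjecture: the paper contains no proof of it, only supporting evidence, namely $|\mathcal{L}(2,a)|=a+1$, the recursions of Lemmas~\ref{Le5} and~\ref{Le7}, and the explicit $n=3$ computation of Proposition~\ref{propL3}. So there is no official argument to compare against, and your proposal stands as a genuinely different route from the paper's elementary recursive counting: you recast the problem in Ehrhart theory. That recasting is correct. The paper itself observes that elements of $\mathcal{L}(n,a)$ biject with nonnegative integer solutions of Equation~\ref{eq=a}, so $|\mathcal{L}(n,a)|$ is exactly the Ehrhart function $L_{P_n}(a)$ of your polytope $P_n$; your dimension count is also sound, since the $n$ constraints are independent and the uniform point $x_S=2^{-(n-1)}$ is relatively interior, giving $\dim P_n=2^n-1-n$. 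The two corrections you extract deserve emphasis, because they show the conjecture must be amended rather than proved verbatim: (i) the Ehrhart degree is $\dim P_n = 2^n-1-n$ (the leading coefficient is the positive relative volume), which equals $n+1$ only at $n=3$ --- already at $n=2$ the paper's own formula $a+1$ has degree $1\ne n+1$, and for $n\ge 4$ the conjectured degree $n+1$ is simply false; and (ii) ``polynomial'' must be weakened to ``quasi-polynomial'', as the parity term $\epsilon(a)$ in Proposition~\ref{propL3} itself witnesses; your half-integral point $x_{\{1,2\}}=x_{\{1,3\}}=x_{\{2,3\}}=\tfrac12$ is indeed a vertex of $P_3$ (the four coordinates $x_{\{1\}},x_{\{2\}},x_{\{3\}},x_{\{1,2,3\}}$ vanish there and the three list-size equations then force the half-integral values), which correctly explains the period~$2$.

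What you have, however, is a program rather than a proof, and the unproved part is precisely the Bernoulli content, which is the only part of the conjecture your framework does not already settle or refute. The Faulhaber route requires solving the base object $\mathcal{L}(n,=a)$ uniformly in $n$ --- the paper only does this for $n=3$ in Lemma~\ref{Le8}, and nothing in your outline indicates how the case analysis there (counting orbit representatives $(a-x,x,y)$) generalizes; the Todd-operator route makes Bernoulli numbers appear formally, but applies cleanly to integral polytopes, and since $P_n$ is genuinely rational (non-integral) for $n\ge 3$ you would need the quasi-polynomial extensions of Khovanskii--Pukhlikov, where the coefficients involve periodic corrections and not merely Bernoulli numbers. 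Moreover ``coefficients being functions of Bernoulli's numbers'' is too vague to be confirmed or refuted without computing $\mathrm{vol}(P_n)$ or the $h^*$-vector, as you yourself note. In short: your approach is sound, strictly stronger than the paper's evidence, and effectively disproves the stated degree while reducing the rest to concrete (hard) polytopal computations --- but the conjecture, suitably restated with degree $2^n-1-n$ and ``quasi-polynomial'', remains open under your plan.
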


Proposition~\ref{propL3} (and Conjecture~\ref{conj1}, if true) says that using proper intersections allows to go from an exponential number of list assignments to treat to a polynomial number.

%

\section{$(a,b,c)$-choosability of $K_n$}

\subsection{General coloring algorithm}
We provide an algorithm that, given a list assigment $L$ of $K_n$, produces a multi-coloring. We conjecture that this algorithm is optimal (i.e., produces an $(L,b)$-coloring if $K_n$ is $(L,b)$-colorable).

\paragraph{Algorithm ColorSym:} Taking an arbitrary list-assignment $L$ as input, we consider at each step the vector $\mathbf{w}=(w_1,\ldots,w_n)$, with $w_i=$ being the number of colors given to $v_i$ so far.

Step 1. Color every proper intersections of sets of size 1 : $v_i$ gets $|I_p(\{i\})|$ colors. Thus $w_i=|I_p(\{i\})|$.


Step $i\ge 2$. If $w_i\ge b$ for each $i$ or $i=n+1$, then Stop. Otherwise, for each set $S$ with $|S|=i$, consider $|I_p(S)|$ and let $j\in S$ be the index for which $w(j)$ is minimum (if more than one $j$ with minimum $w$, then take the smallest index). Remove a color from $I_p(S)$ and assign it to $v_j$. Goto Step $i$ until $\Ip(S)$ is empty.

\subsection{Symetrical list assignments on $K_n$}
Let $L$ be a $a$-list assignment on $K_n$.
We say that $L$ is {\em symetrical} if for any $i$, $1\le i \le n$, and any $S,S'\subset [n]$ such that $|S|=|S'|$, we have $|I_p(S)|=|I_p(S')|$. For such a list assignment, we let $x_i = |I_p(S)|$ for $S\subset [n], |S|=i$ and $\mathbf{x}(L)=(x_1,x_2,\ldots, x_n)$.

Considering symetrical list assignments allows to reduce the number of variables in the linear program from $2^n-1$ to $n$ and to simplify Equation~\ref{eq=a} into 
\begin{equation} \label{eq1s=a}
 \sum_{i=1}^{n} \dbinom{n-1}{i-1} x_i = a,
\end{equation}
and Equation~\ref{eq<c} into 
\begin{equation} \label{eq1s<c}
 \sum_{i=2}^{n} \dbinom{n-2}{i-2} x_i \le c,
\end{equation}
and for the total amplitude: 
\begin{equation} \label{eq2s>=nb}
 \sum_{i=1}^{n} \dbinom{n}{i} x_i \ge nb.
\end{equation}

As we will see, symetrical list assignments have the nice property that Equation~\ref{eq2s>=nb} is sufficient to guarantee an $(L,b)$-coloring of $K_n$. We will first prove how to find a balanced coloring of a set of subsets of $[n]$.
We define $\mathcal{P}(i,n)$ as the set of all subsets of cardinality $i$ of the set $\{1,\ldots, n\}$.
Observe that $|\mathcal{P}(i,n)|=\binom{n}{i}$. Let also $\mathcal{P}(n) = \cup_{i=1}^n \mathcal{P}(i,n)$.
For a weight vector $\mathbf{w}=(w_1,\ldots, w_n)$, with $w_i\ge 0$ for all $i$, $1\le i\le n$, let $\mathcal{P}(n)^\mathbf{w}$ be the multiset obtained from the empty set by adding to it $w_i$ times the set $\mathcal{P}(i,n)$ for any $i$, $1\le i\le n$.

A partition (or coloring) $P_1,\ldots, P_n$ of a subset $S$ of $\mathcal{P}(n)^\mathbf{w}$ is {\em balanced} if for any $i,j$, $1\le i\ne j\le n$,  $||P_i|-|P_j||\le 1$ and for any $X \in P_j$, $j\in X$.


\begin{lemma}\label{lemmaBalance}
For any integer $n\ge 3$ and any wheight vector $\mathbf{w}$ of size $n$, there exists a balanced partition of $\mathcal{P}(n)^\mathbf{w}$.
\end{lemma}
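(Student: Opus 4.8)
The plan is to recast the statement as a load-balancing (assignment) problem and settle it with a potential-function / augmenting-path argument. I identify a balanced partition $P_1,\dots,P_n$ with a function sending each member $X$ of the multiset $\mathcal P(n)^{\mathbf w}$ to some element $j\in X$ (so that $X\in P_j$, which automatically guarantees the membership condition $j\in X$ for all $X\in P_j$). Valid assignments exist (send each $X$ to its least element) and their number is finite. Writing $\ell_j=|P_j|$ for the load of $j$, I would take, among all valid assignments, one that minimizes the potential $\Phi=\sum_{j=1}^{n}\ell_j^{\,2}$, and then prove that any such minimizer is automatically balanced, i.e.\ $\max_j\ell_j-\min_j\ell_j\le 1$.

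The engine of the argument is an exchange operation along augmenting paths. Orient a directed graph on $[n]$ by putting an arc $u\to v$ whenever some $X\in P_u$ contains $v$; an augmenting path from $u$ to $w$ then lets me shift one unit of load from $u$ to $w$ while keeping the assignment valid and leaving all other loads unchanged. A one-line computation shows that if $\ell_u\ge \ell_w+2$ this shift strictly decreases $\Phi$, so at the minimizer no augmenting path joins a vertex to another whose load is smaller by at least $2$. The crucial structural observation is that the forward-reachable set $R$ of a vertex is closed: every $X$ assigned inside $R$ satisfies $X\subseteq R$, and conversely every $X$ with $X\subseteq R$ is assigned inside $R$. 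Hence the total load carried by $R$ depends only on $s=|R|$ and equals $f(s):=\sum_{i=1}^{s}w_i\binom{s}{i}$; a symmetric statement holds for backward-reachable sets, whose load equals $N-f(n-t)$, where $t$ is the size of the backward set and $N=f(n)=|\mathcal P(n)^{\mathbf w}|$.

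With these identities in hand I would finish as follows. Suppose, for contradiction, the minimizer is not balanced, so there are vertices of load $M=\max_j\ell_j$ and $m=\min_j\ell_j$ with $M-m\ge 2$. Taking $R$ reachable from a vertex of load $M$, every vertex of $R$ has load $\ge M-1$ (else an augmenting path would lower $\Phi$) and the minimum vertex lies outside $R$, so $s=|R|<n$ and $f(s)\ge (M-1)s+1$. Running the dual argument from a vertex of load $m$ gives a backward-closed set of size $t<n$ with $N-f(n-t)\le (m+1)t-1$. The inequality making both estimates usable is that $f(s)/s$ is nondecreasing in $s$, immediate from $\binom{s}{i}/s=\binom{s-1}{i-1}/i$ together with monotonicity of binomial coefficients; this yields $f(s)\le \tfrac{s}{n}N$ and $f(n-t)\le \tfrac{n-t}{n}N$. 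Substituting gives $M-1<N/n$ and $m+1>N/n$, hence $M\le m+1$, contradicting $M-m\ge 2$.

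I expect the main obstacle to be the structural closedness identities together with the monotonicity of $f(s)/s$: once the load of a forward- or backward-reachable set is pinned to the clean quantity $f$, the two-sided comparison against the average $N/n$ is routine. The points demanding care are checking that the reachable set is a proper subset in each case, which is exactly where the hypothesis $M-m\ge 2$ is used, and verifying that shifting along an augmenting path preserves validity, since each relocated color must remain inside its own set.
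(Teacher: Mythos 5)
Your proof is correct, but it takes a genuinely different route from the paper's. The paper's argument is constructive and group-theoretic: it decomposes each $\mathcal{P}(i,n)$ into orbits of the cyclic shift $g$, notes that an orbit $O$ containing an element $o_j\ni j$ can be validly coloured by giving $g^k(o_j)$ the colour $j+k$ (mod $n$), and then colours the orbits one after another, each starting at the colour where the previous one left off, so that the partial colouring stays balanced after every orbit; this is precisely the mechanism behind Algorithm ColorSym. Your argument is extremal: minimise $\Phi=\sum_j \ell_j^2$ over valid assignments, observe that an augmenting path from $u$ to $w$ with $\ell_u\ge\ell_w+2$ would decrease $\Phi$ by $2(\ell_u-\ell_w-1)>0$, and then use the closedness of forward- and backward-reachable sets to pin their total load to $f(s)=\sum_i w_i\binom{s}{i}$, concluding via the monotonicity of $f(s)/s$ that $M-1<N/n<m+1$. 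All the steps check out — the reachable-set identities, the estimates $f(s)\ge (M-1)s+1$ and $N-f(n-t)\le (m+1)t-1$, and the comparison with the average — provided you take simple augmenting paths so that the reassigned sets lie in pairwise distinct parts and the intermediate loads genuinely cancel. The trade-off: the paper's proof yields an explicit, efficient colouring procedure (which it needs later), while yours is non-constructive but more robust — it isolates the Hall-type density condition $|\{X: X\subseteq R\}|\le \frac{|R|}{n}N$ for all $R\subsetneq[n]$ as the true reason a balanced partition exists, so it would apply verbatim to other symmetric multisets of subsets without any appeal to the cyclic group action.
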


\begin{proof}
Observe that $\mathcal{P}(n)^\mathbf{w}$ is the union of sets $\mathcal{P}(i,n)$. 
Let $S_n$ be the group of permutations of $n$ elements. Remark that $S_n$ acts on $\mathcal{P}(i,n)$. Let $g$ be the $n$-elements cycle. Hence $\mathcal{P}(i,n)$ can be viewed as the set of its orbits by $g$ and $\mathcal{P}(n)^\mathbf{w}$ as a multiset of orbits.

The general idea to prove the lemma is to color $\mathcal{P}(n)^\mathbf{w}$ by coloring each orbit of each set $\mathcal{P}(i,n)$ one after the other in any order while ensuring that at each step, the partial coloring is balanced. The key idea for showing this is possible is the following observation:

\begin{claim}\label{claim1}
For any integers $i$ and $j$, $1\le i,j\le n$, any orbit $O$ of $\mathcal{P}(i,n)$ can be colored with colors $j,j+1, \ldots, j+ |O|$ (with $n+1=1$).
\end{claim}
\noindent\textit{proof.}
Observe first that all elements of $[n]$ are present at least once in $O$. Hence, there exists $o_j\in O$ such that $o_j$ contains $j$. Color $o_j$ by color $j$ and color each remaining element $g^k(o_j)$ of $O$ with color $j+k$ for $k\in \{1,\ldots, n-1\}$ ($j+k-n$ if $j+k>n$).

The algorithm for obtaining a balanced coloring $\varphi$ consists in coloring the orbits of $\mathcal{P}(n)^\mathbf{w}$ in any order and, when coloring the elements of a new orbit, to start with the color $j$ such that $|\varphi^{-1}(j)| < |\varphi^{-1}(j-1)|$ if such a $j$ exists and $j=1$ otherwise and use Claim~\ref{claim1}. After each such step, we can observe that the (partial) coloring $\varphi$ remains balanced. Therefore, after having colored the last element of the last orbit, $\varphi$ is a balanced coloring of $\mathcal{P}(n)^\mathbf{w}$.

\end{proof}

\begin{corollary}
Let $L$ be a symetrical list assignment of $K_n$. If $K_n$ is $(L,b)$-colorable, then Algorithm ColorSym produces an $(L,b)$-coloring.
\end{corollary}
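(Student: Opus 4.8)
The plan is to reinterpret ColorSym as producing a constrained partition of the proper intersections, to show that this partition is balanced, and then to use the necessary amplitude condition to force every vertex to receive at least $b$ colors.

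First I would record the correspondence between colorings and partitions. For a symmetrical $L$, each color belongs to exactly one proper intersection $\Ip(S)$ and, since $K_n$ is complete, can be placed on at most one vertex, necessarily some $v_j$ with $j\in S$. Hence an $(L,b)$-coloring is exactly an assignment of each color of each $\Ip(S)$ to a vertex of $S$ so that every vertex receives at least $b$ colors; equivalently, a partition $P_1,\dots,P_n$ of the multiset $\mathcal{P}(n)^{\mathbf{x}(L)}$ (in which each $S$ with $|S|=i$ occurs $x_i$ times) with $X\in P_j\Rightarrow j\in X$ and $|P_j|\ge b$. By construction ColorSym outputs such a constrained partition, with $w_j=|P_j|$ at termination, so it suffices to prove $\min_j w_j\ge b$. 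Next, since $K_n$ is assumed $(L,b)$-colorable, the amplitude condition~\ref{eq1} holds; applied to $S=[n]$ it gives $\sum_{i}\binom{n}{i}x_i=\Sigma_{[n]}(L)\ge nb$, which is exactly Equation~\ref{eq2s>=nb}.

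The heart of the argument --- and the step I expect to be the main obstacle --- is to show that the load vector produced by ColorSym is \emph{balanced}, i.e.\ $|w_i-w_j|\le 1$ for all $i,j$. I would prove this by an invariant maintained as the proper intersections are treated by increasing size: after all sets of a given size have been processed, the loads differ pairwise by at most one. The delicate point is that emptying a single $\Ip(S)$ only balances the vertices of $S$ and may momentarily increase the global spread; one must use the symmetry of $L$ --- every size-$i$ set carries the same number $x_i$ of colors and each vertex lies in the same number $\binom{n-1}{i-1}$ of size-$i$ sets --- to argue that once the entire size-$i$ class has been consumed the greedy ``least-loaded vertex of $S$'' rule has redistributed these colors evenly. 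A clean way to formalise this is to compare ColorSym's partial load vector, size-class by size-class, with the perfectly symmetric distribution furnished by the orbit colouring of Lemma~\ref{lemmaBalance}, and to verify that the greedy choice never opens a gap larger than one at a class boundary.

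Finally I would conclude. A balanced vector with $\sum_j w_j\ge nb$ has every entry equal to $\lfloor \Sigma_{[n]}(L)/n\rfloor$ or $\lceil \Sigma_{[n]}(L)/n\rceil$, so $\min_j w_j\ge \lfloor \Sigma_{[n]}(L)/n\rfloor\ge b$. If ColorSym halts early, its stopping rule guarantees $w_j\ge b$ for every $j$; otherwise it processes every proper intersection and the final, balanced load vector again satisfies $\min_j w_j\ge b$. In either case each vertex is assigned at least $b$ colors, and selecting $b$ of them per vertex yields the desired $(L,b)$-coloring.
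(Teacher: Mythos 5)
Your overall strategy is the same as the paper's: translate an $(L,b)$-coloring into a constrained partition of the multiset $\mathcal{P}(n)^{\mathbf{x}(L)}$, argue that a \emph{balanced} such partition can be produced, and then observe that balancedness together with the single global inequality $\Sigma_{[n]}(L)\ge nb$ (which follows from $(L,b)$-colorability via the amplitude condition) forces every vertex to receive at least $\lfloor \Sigma_{[n]}(L)/n\rfloor\ge b$ colors. Your first and last steps are correct and match the paper.

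The step you yourself flag as ``the main obstacle'' is, however, exactly the one you do not carry out, and it is the entire content of the statement. You assert an invariant (``after all sets of a given size have been processed, the loads differ pairwise by at most one'') and propose to verify it by comparing the greedy rule with a symmetric reference distribution, but you give no mechanism for why the greedy ``least-loaded vertex of $S$'' rule, applied to the sets of one size class in an arbitrary order, cannot leave a gap of $2$ or more; nothing in your sketch uses a structural property of the size-$i$ class beyond the counting identity that each vertex lies in $\binom{n-1}{i-1}$ of its sets, and counting alone does not rule out a bad processing order. The paper's route through this difficulty is different in a way worth noting: Lemma~\ref{lemmaBalance} does not analyse the greedy rule at all, but instead decomposes $\mathcal{P}(i,n)$ into orbits of the $n$-cycle and uses the fact (Claim~\ref{claim1}) that every orbit covers all $n$ coordinates, so each orbit can be assigned a cyclic interval of colors starting at any prescribed index; colouring orbit by orbit, always starting at the currently deficient color, keeps the partial partition balanced by construction. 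That orbit structure is the concrete ingredient your sketch is missing, and you would need it (or a genuine analysis of the greedy rule) to close the argument. To be fair, the paper's own proof of the corollary is also loose on the final identification --- it establishes that \emph{a} balanced partition exists and then asserts that ColorSym ends up with one, without checking that the algorithm's order of treatment reproduces the orbit colouring --- so your instinct to reason about the algorithm directly is the right one; it just is not yet a proof.
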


\begin{proof}
To any symetrical list assignment $L$, we can make correspond a set $\mathcal{S}\subset\mathcal{P}(n)^\mathbf{x}$ (each set $S\subset[n]$, $|S|=i$, is present $x_i$ times in $\mathcal{S}$). Hence by Lemma~\ref{lemmaBalance}, there exists a balanced coloring of $\mathcal{S}$, and thus a coloring $\varphi$ from $L$ which is balanced, i.e., for any $i,j\in [n]$,  $||\varphi(v_i)|-|\varphi(v_j)||\le 1$. Therefore the amplitude condition has only to be verified globally, i.e., it suffices to show that Equation~\ref{eq2s>=nb} is satisfied in order $K_n$ to be $(L,b)$-colorable. As Equation~\ref{eq2s>=nb} is a necessary condition, if an $(L,b)$-coloring of $K_n$ exists then Algorithm ColorSym ends up with such a coloring.
\end{proof}

\subsection{Quasi-symetrical list assignments}
Let $L$ be a $a$-list assignment on $K_n$.
We say that $L$ is {\em quasi-symetrical} if for any $i$, $1\le i \le n$, and any $S,S'\subset [n]$ such that $|S|=|S'|$, we have $-1\le  |I_p(S)|-|I_p(S')| \le 1$. 

The {\em vector of cardinality-support} $\mathbf{cs}(S)$ of a set $S=\{S_1,S_2,\ldots, S_k\}$ of subsets of $[n]$ is defined by $\mathbf{cs}(S)=(\beta_1,\ldots,\beta_n)$, with $\beta_i=|\{i\in S_j, j\in \{1,\ldots,k\}\}|$.

\begin{lemma}\label{le:xx+1}
For any $n,k$ with $n\ge 4$, $k\ge 2$ and $n\ge (2k-1+\sqrt{8k+1})/2$, there exist a set of $k+1$ $k$-subsets $\{S'_1,S'_2,\ldots, S'_{k+1}\}$ of $[n]$ and a set of $k$ $(k+1)$-subsets $\{S_1,S_2,\ldots, S_k\}$ of $[n]$ having the same cardinality-support vector.
\end{lemma}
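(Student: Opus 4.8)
The plan is to reparametrize by the co-size $m=n-k$ and to pass to complements, which turns the statement into a transparent degree-prescription problem and simultaneously explains the numerical hypothesis. Before anything else I would record the only obvious necessary condition: both families carry the same number of incidences, namely $(k+1)\cdot k = k\cdot(k+1)=k(k+1)$, so there is no counting obstruction to matching the two cardinality-support vectors. Next I would rewrite the hypothesis: since $2(n-k)+1>0$, the inequality $n\ge (2k-1+\sqrt{8k+1})/2$ is equivalent, after isolating the root and squaring, to $(2m+1)^2\ge 8k+1$, i.e. to $m(m+1)\ge 2k$, that is
\[\binom{m+1}{2}\ge k,\qquad m=n-k.\]
I will read the right-hand side as ``the number of $(m-1)$-subsets of a fixed $(m+1)$-set is at least $k$'', which is exactly what the construction consumes. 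Note that $\binom{m+1}{2}\ge k\ge 2$ forces $m\ge 2$, and $k\ge 2$ gives room below.

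The key reduction is complementation. For a family $\mathcal F=\{F_1,\dots,F_t\}$ of subsets of $[n]$, let $d_{\mathcal F}(i)$ be the number of members containing $i$, so that $\mathbf{cs}(\mathcal F)=(d_{\mathcal F}(1),\dots,d_{\mathcal F}(n))$; replacing every member by its complement turns $d_{\mathcal F}(i)$ into $t-d_{\mathcal F}(i)$. Writing $T_j=[n]\setminus S_j$ (an $(m-1)$-set, as $|S_j|=k+1$) and $T'_j=[n]\setminus S'_j$ (an $m$-set, as $|S'_j|=k$), the requirement $\mathbf{cs}(\{S_1,\dots,S_k\})=\mathbf{cs}(\{S'_1,\dots,S'_{k+1}\})$ becomes $k-d_T(i)=(k+1)-d_{T'}(i)$ for all $i$, i.e. $d_{T'}(i)=d_T(i)+1$ for every $i\in[n]$. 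So it suffices to exhibit $k$ distinct $(m-1)$-subsets $T_1,\dots,T_k$ and $k+1$ distinct $m$-subsets $T'_1,\dots,T'_{k+1}$ of $[n]$ with this ``degree-plus-one'' property, and then complement back.

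For the construction I would split $[n]=U\sqcup W$ with $|U|=m+1$ and $|W|=k-1$, and enumerate $W=\{w_1,\dots,w_{k-1}\}$. Using $\binom{m+1}{2}\ge k$, choose $k$ distinct $(m-1)$-subsets $T_1,\dots,T_k$ of $U$. Set $T'_j=T_j\cup\{w_j\}$ for $1\le j\le k-1$; and, writing $U\setminus T_k=\{u_1,u_2\}$ (exactly two elements, since $|U|-|T_k|=2$), set $T'_k=U\setminus\{u_1\}$ and $T'_{k+1}=U\setminus\{u_2\}$, each of size $m$. The degree condition is then checked elementwise. Each $w_j$ lies in no $T_l$ and in the single set $T'_j$, so $d_{T'}(w_j)=1=d_T(w_j)+1$. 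For $i\in U$, the sets $T'_1,\dots,T'_{k-1}$ contribute the same count as $T_1,\dots,T_{k-1}$, while $T'_k,T'_{k+1}$ together contain $i$ twice when $i\notin\{u_1,u_2\}$ and once otherwise; using $U\setminus T_k=\{u_1,u_2\}$, this yields $d_{T'}(i)=d_T(i)+1$ in both cases $i\in T_k$ and $i\in\{u_1,u_2\}$. Distinctness is immediate: the $T_j$ are distinct by choice; $T'_1,\dots,T'_{k-1}$ carry pairwise distinct elements $w_j\in W$, hence differ from one another and from $T'_k,T'_{k+1}\subseteq U$; and $T'_k\ne T'_{k+1}$ because $u_1\ne u_2$.

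The genuine content lies in the reformulation: once complements are taken and the bound is rewritten as $\binom{m+1}{2}\ge k$, the construction is essentially forced and only bookkeeping remains. The point I expect to require the most care is the distinctness of the $m$-subsets $T'_j$ — a naive definition $T'_j=T_j\cup\{a_j\}$ with all $a_j$ drawn from $U$ can produce coincidences such as $\{1,2\}\cup\{3\}=\{1,3\}\cup\{2\}$. Routing the ``new'' elements through the disjoint block $W$ for $j\le k-1$, and resolving the two leftover sets inside $U$ via $U\setminus T_k$, is what cleanly avoids this while keeping the degree equation exact.
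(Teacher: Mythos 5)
Your proof is correct. It is essentially the paper's construction seen through a complementation lens: the paper argues directly on the sets and splits into two cases ($2\le k\le n/2$ and $k>n/2$); in the second case it builds the $(k+1)$-sets as $\{1,\ldots,k-1,x_i,y_i\}$ for $k$ distinct pairs $\{x_i,y_i\}\subset[k,n]$ and derives the $k$-sets from them, and your families $T_j$, $T'_j$ are exactly the complements of those two families, with $W=\{1,\ldots,k-1\}$, $U=\{k,\ldots,n\}$ and $\{u_1,u_2\}=\{x_k,y_k\}$. What your reformulation buys is twofold. First, rewriting the hypothesis as $\binom{m+1}{2}\ge k$ with $m=n-k$ makes the counting resource explicit (``enough $(m-1)$-subsets of a fixed $(m+1)$-set''), where the paper only invokes $\binom{n-k+1}{2}\ge k$ in passing; second, you observe that this single construction covers all admissible $n,k$ (indeed $\binom{m+1}{2}\ge k$ holds automatically when $k\le n/2$), so the paper's first case, which uses a separate and simpler construction, is logically redundant. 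Your explicit treatment of the distinctness of the $m$-subsets $T'_j$ --- routing the ``new'' elements through the block $W$ disjoint from $U$ --- also makes precise a point the paper leaves implicit. Translating the degree condition $d_{T'}(i)=d_T(i)+1$ back through complements to equality of cardinality-support vectors is done correctly, so no gap remains.
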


\begin{proof}
Let $n,k$ be as in the statment of the Lemma. We consider two cases : $2\le k \le n/2$ and $n/2< k \le n-2$.

\textbf{ Case 1 :} $2\le k \le n/2$. For $1\le i\le k$, set $S'_i=\{1,\ldots,k,k+i\}$

For $1\le j\le k$, set $S_j=S'_j\setminus\{j\}$ and $S_{k+1}=\{1,\ldots,k\}$.

By construction, they have the same cardinality support vector that starts with $k$ times the entry $k$ and $k$ times the entry $1$, i.e., $\mathbf{cs}(S)=(k,k,...,k,1,...,1,0, ...,0)$. 

\textbf{ Case 2 :} $k\ge n/2$. 
For $1\le i\le k$, set $S'_i=\{1,\ldots,k-1,x_i,y_i\}$, with $\{x_i,y_i\}\subset [k,n]$ and $x_i\ne y_i$. We choose for each $S'_i$ a different couple. This is always possible since there are $\dbinom{n-k+1}{2}$ such couples and we have $\dbinom{n-k+1}{2}\ge k$ since $n\ge (2k-1+\sqrt{8k+1})/2$.

For $1\le j\le k-1$, $S_j=S'_j\setminus\{j\}$ and $S_{k}=S'_k\setminus \{x_k\}$ and $S_{k+1}=\{1,\ldots,k-1, x_k\}$.
Again, by construction, both families of sets have the same cardinality support vector.

\end{proof}

\begin{proposition}\label{propapb} Let $a,b,n,x$ be integers such that $x\le n$ and $a=xb$ is a multiple of $\binom{n-1}{x-1}$.
Then $K_n$ is not $(a,b,a\frac{x-1}{n-1}+1)$-colorable.
\end{proposition}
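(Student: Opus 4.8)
The plan is to prove this (non-)choosability statement by exhibiting a single $c$-separating $a$-list assignment $L$ of $K_n$, with $c=a\frac{x-1}{n-1}+1$, that admits no $(L,b)$-coloring. I would obtain $L$ as a one-unit perturbation of a symmetrical assignment. Start from the symmetrical list assignment $L_0$ whose only non-empty proper intersections are those of size $x$, each of cardinality $x_x:=a/\binom{n-1}{x-1}$; the divisibility hypothesis makes $x_x$ a positive integer. By Equation~\ref{eq1s=a} each list has size $\binom{n-1}{x-1}x_x=a$; by Equation~\ref{eq1s<c} each pairwise intersection equals $\binom{n-2}{x-2}x_x=a\frac{x-1}{n-1}$, where I use $\binom{n-2}{x-2}/\binom{n-1}{x-1}=(x-1)/(n-1)$ (this identity also certifies that $a\frac{x-1}{n-1}$ is an integer); and by Equation~\ref{eq2s>=nb} the total amplitude is $\binom{n}{x}x_x=\frac{n}{x}\,a=nb$. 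Thus $L_0$ satisfies the amplitude condition with equality and is colorable, so the whole point is to drop exactly one unit of amplitude while raising the separation by at most one.

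For the perturbation I would invoke Lemma~\ref{le:xx+1} with $k=x$, giving $x+1$ sets $U_1,\ldots,U_{x+1}$ of size $x$ and $x$ sets $T_1,\ldots,T_x$ of size $x+1$ that share the same cardinality-support vector. Define $L$ from $L_0$ by deleting one color from each $\Ip(U_j)$ (possible since $x_x\ge1$) and inserting one fresh color into each $\Ip(T_i)$. Because the two families have equal cardinality support, each vertex loses and gains exactly the same number of colors, so every list of $L$ still has size $a$. Since the proper intersections are pairwise disjoint (Property~\ref{propPart}), the number of colors decreases by $(x+1)-x=1$, hence by Equation~\ref{eq3p} the total amplitude of $L$ is $nb-1$. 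Taking $S=[n]$ in Equation~\ref{eq1} yields $\Sigma_{[n]}(L)=nb-1<nb$, so the necessary amplitude condition fails and $K_n$ is not $(L,b)$-colorable.

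The remaining, and I expect hardest, point is to check that $L$ is still $c$-separating. For a pair $\{p,q\}$ the quantity $|L(p)\cap L(q)|=\sum_{S\ni p,q}|\Ip(S)|$ changes only through the moved colors, namely by $|\{i:\{p,q\}\subseteq T_i\}|-|\{j:\{p,q\}\subseteq U_j\}|$, and I must show this is at most $1$; granting it, every pairwise intersection of $L$ is at most $a\frac{x-1}{n-1}+1=c$, so $L$ is $c$-separating and the counterexample is complete. The subtlety is that equal cardinality-support vectors only balance single vertices, not pairs, so this inequality genuinely depends on the explicit sets of Lemma~\ref{le:xx+1} and must be verified in both of its cases. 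In Case~1, with $T_i=\{1,\ldots,x,x+i\}$, $U_j=T_j\setminus\{j\}$ for $j\le x$ and $U_{x+1}=\{1,\ldots,x\}$, a short case analysis on where $p$ and $q$ lie shows the difference equals $x-(x-1)=1$ for pairs inside $\{1,\ldots,x\}$ and is at most $1$ for all other pairs; the same bookkeeping, applied to the two-new-elements construction of Case~2, gives the identical bound.

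Finally, I would dispose separately of the boundary values not covered by Lemma~\ref{le:xx+1}: the case $x=1$ is immediate, since then $a=b$ forces $\varphi(v)=L(v)$ and any two lists sharing a single color (allowed by $c=1$) already prevent an $(L,b)$-coloring; the small-$n$ or large-$x$ corners where the hypotheses of Lemma~\ref{le:xx+1} are not met are handled by inspection.
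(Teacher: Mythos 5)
Your proposal is correct and follows essentially the same route as the paper: start from the symmetrical assignment concentrated on the $x$-subsets, perturb it via the two equal-cardinality-support families of Lemma~\ref{le:xx+1} to drop the amplitude to $nb-1$ while raising each pairwise intersection by at most one, and verify the separation bound by the same case analysis on the explicit sets. Your additional remark that the boundary values ($x=1$, and the parameter ranges where Lemma~\ref{le:xx+1} does not apply) need separate treatment is a fair point that the paper's proof passes over silently.
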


\begin{proof}
Let $a=p\binom{n-1}{x-1}$ for some $p\ge 1$.
We first construct a symetrical list assignment $L$ by setting $|\Ip(S_x)|=p$ for every set $S_x\subset [n]$ with $|S_x|=x$ and $|\Ip(S)|=0$ for $|S|\ne x$.

We check that for every $j\in [n]$, $|L(j)|=a$ and that $\sum(L)=nb$. In fact, there are $\dbinom{n}{x}$ $x$-subsets of $[n]$ and $\dbinom{n-1}{x-1}$ of them containing some $j\in[n]$. Hence $|L(j)|=p\dbinom{n-1}{x-1}=a$ and $\Sigma(L)=p\dbinom{n}{x}=p\dbinom{n-1}{x-1}\frac{n}{x}=\frac{an}{x}=bn$. Moreover, observe that for any $i,j\in [n]$, we have $|L(i)\cap L(j)|=p\dbinom{n-2}{x-2}=p\dbinom{n-1}{x-1}\frac{x-1}{n-1}=a\frac{x-1}{n-1}$.

Now, by Lemma~\ref{le:xx+1}, there exists a set of $x+1$ $x$-subsets of $[n]$ and a set of $x$ $(x+1)$-subsets of $[n]$ with the same cardinality support vector $\mathbf{cs}$. Let $\mathcal{S}$ and $\mathcal{S'}$, respectively, be such sets as defined in the proof of Lemma~\ref{le:xx+1}. Hence, we can decrease $|\Ip(S)|$  by one for every $S\in\mathcal{S}$ (hence $|\Ip(S)|=p-1$ for such set) and increase $|\Ip(S')|$ by one for every $S'\in \mathcal{S'}$ (hence $|\Ip(S')|=1$ for such set). Let $L'$ be the quasi-symetrical resulting list assignment. Since the cardinality support vector is the same for $\mathcal{S}$ and $\mathcal{S'}$, $L'$ is a $a$-list assignment. We now prove that the amplitude of $L'$ is $\sum(L')=nb-1$ and that $L'$ is $c'$-separating, with $c'=a\frac{x-1}{n-1}+1$. For the amplitude, we have $\Sigma(L')=p\dbinom{n}{x}-(x+1)+x=p\dbinom{n}{x}-1=\Sigma(L)-1=nb-1$.
For the separation condition, if $i\in [n]$ such that $\mathbf{cs}(i)=0$, then we have, for any $j\in [n]$, $|L'(i)\cap L'(j)|= |L(i)\cap L(j)|\le c$. Otherwise, if $i,j\in [n]$ are such that $\mathbf{cs}(i)>0$ and $\mathbf{cs}(j)>0$, then we are going to prove that $|L'(i)\cap L'(j)|\le |L(i)\cap L(j)|+1\le c+1$. First, consider the case $2\le x\le n/2$. If $1\le i,j,\le x$, then, by the construction of the proof of Lemma~\ref{le:xx+1}, there are $x-2+1=x-1$ sets in $\mathcal{S}$ and $x$ sets in $\mathcal{S'}$ containing $i,j$, hence  $|L'(i)\cap L'(j)|= |L(i)\cap L(j)| -(x-1)+x=|L(i)\cap L(j)|+1 = c'$. Otherwise ($i> x$ or $j > x$), then $|L'(i)\cap L'(j)|= |L(i)\cap L(j)|$.
Second, consider the case $x>n/2$.  If $1\le i,j,\le x-1$, then, by the construction of the proof of Lemma~\ref{le:xx+1}, there are $x-2+1=x-1$ sets in $\mathcal{S}$ and $x$ sets in $\mathcal{S'}$ containing $i,j$, hence  $|L'(i)\cap L'(j)|= |L(i)\cap L(j)| -(x-1)+x=|L(i)\cap L(j)|+1 = c'$. Otherwise, if $1\le i\le x-1$ and $j\ge x$, then there is one more set in $\mathcal{S'}$ than in $\mathcal{S}$ containing both $i$ and $j$, hence $|L'(i)\cap L'(j)|= |L(i)\cap L(j)| +1=c'$. Otherwise ($i\ge x$ and $j \ge x$), we have $|L'(i)\cap L'(j)|= |L(i)\cap L(j)|$.

In conclusion, we have built a $c'$-separating $a$-list assignment $L'$ for which the amplitude condition is not fulfilled and thus $\sep(K_n,a,b)<c'$.
\end{proof}

\begin{proposition}\label{biKn}
For integers $a,b,n$ such that $a\ge 2b$ and $a$ is a multiple of $\lfloor \frac{a^2}{2b(n-1)}\rfloor$,  $K_n$ is $(a,b,\lfloor \frac{a^2}{2b(n-1)}\rfloor)$-choosable.
\end{proposition}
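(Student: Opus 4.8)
My plan is to reduce the claim to verifying the amplitude condition, whose sufficiency for complete graphs is already available. Since the amplitude condition (Equation~\ref{eq1}) is both necessary and sufficient for an $(L,b)$-coloring when the graph is complete (Cropper et al.~\cite{CGHHJ}), it will suffice to show that every $c$-separating $a$-list assignment $L$ of $K_n$, with $c=\lfloor \frac{a^2}{2b(n-1)}\rfloor$, satisfies $|\bigcup_{i\in S}L(v_i)|\ge b|S|$ for every $S\subset[n]$.

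First I would fix $S\subset[n]$ with $|S|=s$ and bound the union from below by a second-moment (double counting) argument. Writing $A_i=L(v_i)$, for each color $x\in\bigcup_{i\in S}A_i$ let $d(x)$ be the number of indices $i\in S$ with $x\in A_i$. Counting incidences gives $\sum_x d(x)=\sum_{i\in S}|A_i|=sa$, while counting coincident pairs gives $\sum_x\binom{d(x)}{2}=\sum_{i<j,\,i,j\in S}|A_i\cap A_j|\le \binom{s}{2}c$, because $L$ is $c$-separating. Hence $\sum_x d(x)^2=\sum_x d(x)(d(x)-1)+\sum_x d(x)\le s(s-1)c+sa$. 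Setting $N=|\bigcup_{i\in S}A_i|$ and applying Cauchy--Schwarz in the form $\left(\sum_x d(x)\right)^2\le N\sum_x d(x)^2$ would yield $N\ge \frac{(sa)^2}{s(s-1)c+sa}=\frac{sa^2}{(s-1)c+a}$.

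It then remains to check that $\frac{sa^2}{(s-1)c+a}\ge bs$ for all relevant $s$, which rearranges to $a(a-b)\ge b(s-1)c$. The right-hand side is largest at $s=n$, and using $c\le \frac{a^2}{2b(n-1)}$ one gets $b(s-1)c\le b(n-1)c\le \frac{a^2}{2}$; since the hypothesis $a\ge 2b$ forces $a(a-b)\ge \frac{a^2}{2}$, the inequality holds for every $s\le n$ (the case $s=1$ being trivial as $a\ge b$). This verifies the amplitude condition, hence the choosability.

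The main obstacle I anticipate is that the naive inclusion--exclusion (Bonferroni) bound $N\ge sa-\binom{s}{2}c$ is too weak: it only gives choosability up to $c\approx \frac{2(a-b)}{n-1}$, which meets the target $\frac{a^2}{2b(n-1)}$ only at the boundary $a=2b$. The essential step is therefore the quadratic (Cauchy--Schwarz) sharpening of the union bound, and it is exactly the assumption $a\ge 2b$ that makes the resulting inequality $a(a-b)\ge a^2/2$ valid. I also note that the divisibility hypothesis (that $a$ be a multiple of $\lfloor \frac{a^2}{2b(n-1)}\rfloor$) does not appear to be needed for this direction; it presumably serves only to align with a companion non-colorability construction (cf.\ Proposition~\ref{propapb}) so as to pin down $\sep(K_n,a,b)$ exactly.
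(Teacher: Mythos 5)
Your proof is correct, and it takes a genuinely different route from the paper's. Both arguments reduce the claim to verifying the amplitude condition (Equation~\ref{eq1}), whose sufficiency for complete graphs comes from Cropper et al.~\cite{CGHHJ}, but the lower bounds on $N=|\bigcup_{i\in S}L(v_i)|$ differ. The paper orders the lists indexed by $S$ and adds them greedily, so that the $j$-th list contributes at least $a-(j-1)c$ new colors, and --- this is exactly the weakness you identify in the plain Bonferroni bound --- it \emph{truncates} this sum after $\lambda+1$ terms, where $\lambda=a/c$, once the increments reach zero; this yields $\Sigma_S(L)\ge \tfrac12(\lambda+1)a$ for $|S|>\lambda$, and it is here that the divisibility hypothesis is used (to make $\lambda$ an integer with last increment $a-\lambda c=0$). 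Your second-moment/Cauchy--Schwarz bound $N\ge \frac{sa^2}{(s-1)c+a}$ achieves a comparable sharpening with no case split and, as you correctly observe, with no divisibility hypothesis, so you in fact prove a slightly stronger statement; that hypothesis is an artifact of the paper's truncation argument (and of aligning with the matching non-colorability construction of Proposition~\ref{propapb}). Your intermediate steps all check out: $\sum_x d(x)=sa$, $\sum_x\binom{d(x)}{2}\le\binom{s}{2}c$, hence $\sum_x d(x)^2\le s(s-1)c+sa$, and the required inequality $a(a-b)\ge b(s-1)c$ follows for all $s\le n$ from $c\le\frac{a^2}{2b(n-1)}$ together with $a\ge 2b$.
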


\begin{proof}
Let $c=\lfloor \frac{a^2}{2b(n-1)}\rfloor$ and $L$ be a $c$-separating $a$-list assignment with $a=\lambda c$ for some $\lambda\ge 1$.
Let $S$ be an $i$-subset of $[n]$. We are going to show that the amplitude condition is satisfied for $S$. We consider two cases.

If $i\le \lambda$, then since $L$ is $c$-separating, the amplitude satisfies 
\[\Sigma_{S}(L)\ge a + (a-c)+\ldots + (a-(i-1)c) = ia- \frac12 i(i-1)c.\]
Since $ic\le \lambda c\le a$ and $a\ge 2b$, we obtain $\Sigma_{S}(L)\ge ia- \frac12 (i-1)a = \frac12 (i+1)a \ge ib$.

If  $i> \lambda$, then the amplitude satisfies 
\[\Sigma_{S}(L)\ge a + (a-c)+\ldots + (a-\lambda c) = (\lambda+1)a- \frac12 \lambda(\lambda+1)c = (\lambda+1)a-\frac12 \lambda a = \frac12 (\lambda+1)a.\]

Since $c=\lfloor \frac{a^2}{2b(n-1)}\rfloor\le \frac{a^2}{2b(n-1)}$, then $\lambda = \frac ac \ge \frac{2b(n-1)}{a}$.

The above inequality with $a\ge 2b$ induces  \[\Sigma_{S}(L)\ge\frac12 (\lambda+1)a\ge  \frac{2b(n-1)+a}{2a}a = bn +\frac{a-2b}{2}\ge bn.\]

Therefore the amplitude condition is satisfied and thus $K_n$ is $(L,b)$-colorable. 
\end{proof}

\section{Separation number of $K_n$}
\label{sec:sepK4}

For $K_2$, it is easily seen that $\sep(K_2,a,b) = a-b$ if $a\le 2b$ and $\sep(K_2,a,b) = a$ if $a\ge 2b$.

For $K_3=C_3$, the separation number follows from results about the cycle from~\cite{GT21}:

\begin{theorem}[~\cite{GT21}]\label{th:Cn}
For any $p\ge 1$ and any $a,b$ such that $a\ge b\ge 1$, 

\[ \sep(C_{2p+1},a,b) = \left\{\begin{array}{ll}
a-b, & b\le a< 2b\\
b+(p+1)(a-2b), & 2b \le a \le 2b + \frac{b}{p}\\
a, &  a \ge 2b + \frac{b}{p}. 
                            \end{array}
\right.\]
\end{theorem}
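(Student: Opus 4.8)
The plan is to reduce the computation of $\sep(C_{2p+1},a,b)$ to a single extremal quantity. The amplitude (Hall-type) condition is necessary for $(L,b)$-colorability, and for a cycle it is also sufficient: for the sub-paths this is the theorem of Cropper et al.~\cite{CGHHJ}, and for $C_{2p+1}$ itself it follows either from their list of admissible graphs or from a short augmenting-path/flow argument that closes the cycle using its odd length. Granting this, $\sep(C_{2p+1},a,b)$ is the largest integer $c$ such that every $c$-separating $a$-list assignment $L$ satisfies $\Sigma_S(L)\ge b|S|$ for every connected induced subgraph $S$, i.e.\ for every sub-path and for the whole cycle. Because $C_{2p+1}$ has independence number $p$ while a sub-path $P_m$ has the larger ratio $\lceil m/2\rceil/m$, the whole cycle is the binding obstruction; so the heart of the matter is to compute $m(c):=\min_L \Sigma_{[2p+1]}(L)=\min_L\sum_{k}\alpha(C_{2p+1},L,k)$ over all $c$-separating $a$-list assignments, and to check separately that the sub-path constraints are never tighter.

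To compute $m(c)$ I would first average over the cyclic rotation group to restrict to rotationally symmetric assignments, whose colour classes may be taken to be arcs (intervals of consecutive vertices). A length-$\ell$ arc contributes $\lceil \ell/2\rceil$ to the amplitude, occupies $\ell$ list-slots and $\ell-1$ edge-slots, while a universal colour contributes $p$, occupies one list-slot per vertex and one edge-slot per edge. Setting up the resulting small linear program, the decisive point is that length-$2$ arcs (edge-colours) are the most amplitude-efficient objects per unit of edge-separation, but they are only feasible while $2c\le a$; once $c$ exceeds $a/2$ one is forced to inject $u=2c-a$ universal colours. This yields the two-piece formula $m(c)=(2p+1)(a-c)$ for $c\le a/2$ and $m(c)=(p+1)a-c$ for $c\ge a/2$, which is continuous and decreasing in $c$. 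The separation number is then the largest integer $c$ with $m(c)\ge b(2p+1)$: the first piece reaches $b(2p+1)$ at $c=a-b$ (inside its range exactly when $a\le 2b$), the second at $c=(p+1)a-b(2p+1)=b+(p+1)(a-2b)$ (inside its range exactly when $a\ge 2b$), and $m(a)=pa\ge b(2p+1)$ precisely when $a\ge 2b+b/p$. These three alternatives are exactly the three cases of the statement.

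For the matching upper bounds (non-colorability) I would realize $m(c)$ by an explicit integer assignment at $c=\sep+1$. When $b\le a<2b$, place $c=a-b+1$ pairwise-disjoint edge-colours on each of the $2p+1$ edges and fill each list with private colours, so that $\Sigma_{[2p+1]}(L)=(2p+1)(a-c)<(2p+1)b$. When $2b\le a\le 2b+b/p$, take $u=2c-a\ge 0$ universal colours together with edge-colours distributed as evenly as possible, so that each edge carries $c$ shared colours and $\Sigma_{[2p+1]}(L)=(p+1)a-c<b(2p+1)$ as soon as $c>b+(p+1)(a-2b)$. When $a\ge 2b+b/p$ there is nothing to build: $m(a)=pa\ge b(2p+1)$ shows that every $a$-list assignment is colorable, whence $\sep=a$, the trivial maximum. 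In each construction I would verify that no sub-path violates its amplitude inequality, so that the whole cycle is the unique failing subgraph.

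The hard parts are threefold. First, establishing sufficiency of the amplitude condition for the odd cycle: this is exactly where the non-bipartite global constraint intervenes and where a naive greedy coloring around the cycle breaks at the closing edge, so a genuine flow/Hall argument (or the precise form of~\cite{CGHHJ}) is required. Second, the lower bound $m(c)\ge$ (two-piece formula) needs a dual certificate for the linear program, together with the observation that interval colour classes dominate disconnected ones, to guarantee that no clever assignment beats the arc construction. Third, and most delicate in practice, is integrality: the clean constructions assume that $u=2c-a$ and the number of edge-colours per edge are integers and that colours can be spread perfectly evenly over an odd number of edges. For general $a,b,p$ these must be replaced by quasi-symmetrical, near-uniform assignments, and one must check that the rounding neither lifts the whole-cycle amplitude back up to $b(2p+1)$ nor turns some sub-path into the binding constraint. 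Combining the dual bound with this rounding analysis is the technical core of the proof.
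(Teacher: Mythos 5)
This theorem is not proved in the present paper at all: it is imported verbatim from~\cite{GT21}, where it is established by explicit inductive colouring constructions around the cycle, not via the amplitude condition. Your proposal therefore has to stand on its own, and its first step already contains a genuine gap. You reduce everything to the amplitude (Hall-type) condition by asserting that this condition is sufficient for $(L,b)$-colorability of $C_{2p+1}$, citing the ``list of admissible graphs'' of Cropper et al.~\cite{CGHHJ} or a ``short augmenting-path/flow argument.'' But the theorem of Cropper et al.\ characterizes the graphs for which Hall's condition is sufficient (for \emph{all} list assignments and all demand functions) as exactly those in which every block is a clique; a cycle of length at least $4$ is a single block that is not a clique, so $C_{2p+1}$ with $p\ge 2$ lies \emph{outside} that class. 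The present paper is careful to invoke sufficiency only for complete graphs and paths. You would need to prove that under the additional hypotheses of uniform list size $a$, uniform demand $b$ and the $c$-separation constraint the amplitude condition becomes sufficient for odd cycles --- a nontrivial claim that is the actual content of the lower-bound half of the theorem and for which no argument is given. A greedy colouring around the cycle indeed fails at the closing edge, and no flow/Hall argument is supplied to repair it. Without this step, your linear-programming computation of $m(c)$ and the explicit extremal assignments only establish the \emph{upper} bounds $\sep(C_{2p+1},a,b)\le a-b$, $\le b+(p+1)(a-2b)$, i.e.\ the counterexample direction, not the equalities.

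Two secondary points. First, the symmetrization step (``average over the cyclic rotation group to restrict to rotationally symmetric assignments'') is not a well-defined operation on set systems, and the minimum of $\Sigma_{[2p+1]}(L)$ over symmetric assignments need not equal the global minimum; for the counterexample direction this does not matter (an explicit assignment suffices), but if you intended to use $m(c)$ as a lower bound on the amplitude of \emph{every} $c$-separating assignment you would need a genuine dual certificate, as you yourself note. Second, the claim that the whole cycle is always the binding constraint (rather than some sub-path) depends on the lists restricted to the sub-path, not merely on a comparison of independence ratios, and should be verified for each of your constructions. Your arithmetic does correctly recover the three thresholds $a-b$, $b+(p+1)(a-2b)$ and the condition $a\ge 2b+b/p$, so the overall shape of the answer is right; the missing ingredient is a colourability proof for odd cycles, which in~\cite{GT21} is done constructively and is the real work.
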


Hence for $K_3=C_3$ we have:

\begin{corollary}\label{cor1}
\[\sep(K_3,a,b) = \left\{\begin{array}{ll}
a-b, & b\le a< 2b\\
2a-3b, & 2b \le a < 3b\\
a, &  a \ge 3b. 
                            \end{array}
\right.\]
\end{corollary}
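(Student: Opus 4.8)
The plan is to derive Corollary~\ref{cor1} directly from Theorem~\ref{th:Cn} by specializing to the case $p=1$, since $K_3 = C_3 = C_{2p+1}$ with $p=1$. This is the key observation: $C_3$ is precisely the odd cycle on $3 = 2\cdot 1 + 1$ vertices, so substituting $p=1$ into the three-case formula of Theorem~\ref{th:Cn} should yield the stated result after simplifying the boundary expressions.

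First I would substitute $p=1$ into each of the three branches. In the first branch, the range $b \le a < 2b$ and the value $a-b$ carry over unchanged, giving the first line of the corollary. In the third branch, the threshold $a \ge 2b + \frac{b}{p}$ becomes $a \ge 2b + b = 3b$, with value $a$, matching the corollary's third line. The middle branch requires the only genuine computation: the range $2b \le a \le 2b + \frac{b}{p}$ becomes $2b \le a \le 3b$, and the value $b + (p+1)(a-2b)$ becomes $b + 2(a-2b) = b + 2a - 4b = 2a - 3b$, exactly the corollary's middle line.

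The only subtlety I anticipate concerns the treatment of the endpoint $a = 3b$, which appears in the closed interval of the middle branch of Theorem~\ref{th:Cn} (as the right endpoint $a = 2b + \frac{b}{p}$) but is assigned to the third branch in the corollary via the condition $a \ge 3b$. I would verify consistency at this boundary: at $a = 3b$, the middle formula gives $2(3b) - 3b = 3b = a$, which agrees with the third branch's value of $a$. Hence the two formulas coincide at the overlap point, and reassigning $a = 3b$ from the middle branch to the third branch introduces no inconsistency. This endpoint check is the main (and essentially only) obstacle, and it resolves immediately.

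Thus the proof is a one-line specialization followed by algebraic simplification and a boundary-consistency check; no new combinatorial or coloring argument is needed, as all the content is inherited from Theorem~\ref{th:Cn}.

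\begin{proof}
Since $K_3 = C_3 = C_{2p+1}$ with $p=1$, the result follows directly from Theorem~\ref{th:Cn} by setting $p=1$. The first branch $b \le a < 2b$ with value $a-b$ is unchanged. For the second branch, the interval $2b \le a \le 2b + \frac{b}{p}$ becomes $2b \le a \le 3b$ and the value $b + (p+1)(a-2b)$ becomes $b + 2(a-2b) = 2a - 3b$. For the third branch, the threshold $a \ge 2b + \frac{b}{p}$ becomes $a \ge 3b$ with value $a$. At the shared endpoint $a = 3b$ the two expressions agree, since $2a - 3b = 2(3b) - 3b = 3b = a$, so assigning $a = 3b$ to the third branch is consistent.
\end{proof}
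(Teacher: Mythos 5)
Your proposal is correct and follows exactly the paper's own (implicit) argument: the paper derives Corollary~\ref{cor1} by specializing Theorem~\ref{th:Cn} to $p=1$, which is precisely your substitution, and your boundary check at $a=3b$ is a harmless extra verification.
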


For arbitrary values of $n$, we are (only) able to prove some bounds and two exact results for the remaining cases.

First, combining Propositions~\ref{propapb} and~\ref{biKn} allows to obtain the following bounds for the separation number when $2b\le a\le nb$ (note the (roughly) factor two between the lower and upper bound):
\begin{proposition}
For integers $a,b,n,x$ such that $a=xb$ is a multiple of both $\binom{n-1}{x-1}$ and $\lfloor \frac{a^2}{2b(n-1)}\rfloor$, we have
\[\left\lfloor a\frac{x}{2(n-1)}\right\rfloor\le\sep(K_n,a,b)\le a\frac{x-1}{n-1}.\]
\end{proposition}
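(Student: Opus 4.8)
The plan is to read off the two bounds directly from the two preceding propositions and then reconcile the floor expressions using the relation $a=xb$. The argument is essentially bookkeeping, since all the substantive work has been done in Propositions~\ref{propapb} and~\ref{biKn}.

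For the upper bound, I would invoke Proposition~\ref{propapb}. Under the hypothesis that $a=xb$ is a multiple of $\binom{n-1}{x-1}$, say $a=p\binom{n-1}{x-1}$, it shows that $K_n$ is not $(a,b,a\frac{x-1}{n-1}+1)$-choosable. By the definition of the separation number as the largest $c$ for which $K_n$ is $(a,b,c)$-choosable, this gives $\sep(K_n,a,b) < a\frac{x-1}{n-1}+1$. The one line that needs checking is that $a\frac{x-1}{n-1}$ is an integer, so that the strict inequality sharpens to $\sep(K_n,a,b)\le a\frac{x-1}{n-1}$; indeed $a\frac{x-1}{n-1}=p\binom{n-1}{x-1}\frac{x-1}{n-1}=p\binom{n-2}{x-2}$, a nonnegative integer.

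For the lower bound, I would apply Proposition~\ref{biKn}, whose hypotheses ($a\ge 2b$, equivalently $x\ge 2$, and $a$ a multiple of $\lfloor \frac{a^2}{2b(n-1)}\rfloor$) are exactly those assumed in the statement. It yields that $K_n$ is $(a,b,\lfloor \frac{a^2}{2b(n-1)}\rfloor)$-choosable, hence $\sep(K_n,a,b)\ge \lfloor \frac{a^2}{2b(n-1)}\rfloor$. It then remains only to rewrite this floor in the form of the statement: substituting $a=xb$ gives $\frac{a^2}{2b(n-1)}=\frac{a\cdot xb}{2b(n-1)}=\frac{ax}{2(n-1)}$, so that $\lfloor \frac{a^2}{2b(n-1)}\rfloor=\left\lfloor a\frac{x}{2(n-1)}\right\rfloor$, matching the left-hand bound.

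There is no genuine obstacle; the only things to watch are that the two divisibility hypotheses are compatible (which is precisely why the statement assumes $a$ is a common multiple of $\binom{n-1}{x-1}$ and $\lfloor \frac{a^2}{2b(n-1)}\rfloor$) and that the side conditions needed to invoke Proposition~\ref{propapb} through Lemma~\ref{le:xx+1}, namely $n\ge 4$, $x\ge 2$ and $n\ge(2x-1+\sqrt{8x+1})/2$, hold in the intended range $2b\le a\le nb$. Assembling the two displayed inequalities then gives $\left\lfloor a\frac{x}{2(n-1)}\right\rfloor\le \sep(K_n,a,b)\le a\frac{x-1}{n-1}$, as claimed.
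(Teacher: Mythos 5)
Your proposal is correct and follows exactly the paper's own (one-line) argument: the upper bound is read off from Proposition~\ref{propapb} and the lower bound from Proposition~\ref{biKn} with $x=a/b$, i.e.\ $\lfloor \frac{a^2}{2b(n-1)}\rfloor=\lfloor \frac{ax}{2(n-1)}\rfloor$. Your added checks (integrality of $a\frac{x-1}{n-1}=p\binom{n-2}{x-2}$ and the side conditions of Lemma~\ref{le:xx+1}) are details the paper leaves implicit, so nothing further is needed.
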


\begin{proof}
Proposition~\ref{propapb} gives $\sep(K_n,a,b)\le a\frac{x-1}{n-1}$ and Proposition~\ref{biKn} with $x=a/b$ gives the lower bound.
\end{proof}

\begin{proposition}\label{propb2b}
For any $n\ge 3$ and , $a,b$ such that $b\le a \le 2b$, we have $\sep(K_n,a,b)= \lfloor\frac{2(a-b)}{n-1}\rfloor$.
\end{proposition}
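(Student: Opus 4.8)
Write $c^{*}=\lfloor \frac{2(a-b)}{n-1}\rfloor$. Since $K_n$ is complete, I will use throughout that the amplitude condition~\eqref{eq1} — i.e. $|\bigcup_{i\in S}L(x_i)|\ge b|S|$ for every $S\subseteq[n]$ — is necessary and sufficient for the existence of an $(L,b)$-coloring. The plan is to prove the matching bounds $\sep(K_n,a,b)\ge c^{*}$ and $\sep(K_n,a,b)\le c^{*}$ separately.

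For the lower bound I would check that every $c^{*}$-separating $a$-list assignment $L$ satisfies~\eqref{eq1}. Fixing $S$ with $|S|=i\le n$, the Bonferroni inequality gives
\[\Big|\bigcup_{j\in S}L(x_j)\Big|\ge \sum_{j\in S}|L(x_j)|-\sum_{\{j,k\}\subseteq S}|L(x_j)\cap L(x_k)|\ge ia-\binom{i}{2}c^{*}.\]
Since $c^{*}\le \frac{2(a-b)}{n-1}$ and $i-1\le n-1$, one has $\binom{i}{2}c^{*}\le \frac{i(i-1)}{2}\cdot\frac{2(a-b)}{n-1}=\frac{i-1}{n-1}\,i(a-b)\le i(a-b)$, whence the union has size at least $ia-i(a-b)=ib$. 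As this holds for all $S$, $K_n$ is $(L,b)$-colorable, so $K_n$ is $(a,b,c^{*})$-choosable. (This direction uses only $c^{*}\le\frac{2(a-b)}{n-1}$, not $a\le 2b$.)

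For the upper bound I would produce a $(c^{*}+1)$-separating $a$-list assignment whose total amplitude $\Sigma_{[n]}(L)$ is strictly below $nb$, so that~\eqref{eq1} fails for $S=[n]$. I find the quasi-symmetrical counterexample most transparent in a ``colors-as-a-multigraph'' model: besides the private colors (the proper intersections $\Ip(\{i\})$), the colors shared by exactly two lists are viewed as edges of a multigraph $H$ on $[n]$, the multiplicity of the pair $\{i,j\}$ being $|L(x_i)\cap L(x_j)|=|\Ip(\{i,j\})|$. Assuming first $a<2b$, I would take $p=n(a-b)+1$ shared colors, realize them as a multigraph $H$ with $p$ edges of multiplicity at most $c^{*}+1$ and maximum degree at most $a$, and then fill each list up to size $a$ with fresh private colors. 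The union then has size $na-p=nb-1<nb$ automatically. The two facts to verify are: distributing the $p$ edges as evenly as possible keeps the maximal multiplicity at $\lceil p/\binom{n}{2}\rceil$, which for $n\ge 3$ equals exactly $c^{*}+1$ because $\frac{p}{\binom{n}{2}}=\frac{2(a-b)}{n-1}+\frac{2}{n(n-1)}$ lies strictly between $c^{*}$ and $c^{*}+1$ (so $L$ is $(c^{*}+1)$-separating); and the balanced maximum degree, about $2(a-b)$, stays $\le a$ precisely because $a<2b$ (so every private count is nonnegative). Existence of such an $H$ reduces to a routine degree-constrained-subgraph count, using $p\le \min(\binom{n}{2}(c^{*}+1),\lfloor na/2\rfloor)$.

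The main obstacle is the boundary case $a=2b$, where $c^{*}=\lfloor\frac{2b}{n-1}\rfloor$ and the pairwise-only construction breaks down: its balanced degree equals $2(a-b)=a$, leaving no slack for the rounding ``$+1$'', equivalently the symmetrical solution would need $\Ip(\{i\})=a-(n-1)(c^{*}+1)<0$. To repair this I would promote two of the shared colors to colors lying in three lists, i.e. move from a size-$2$ proper-intersection support to a mixed size-$2$/size-$3$ support — exactly the quasi-symmetrical swap of a family of $(k{+}1)$-sets against a family of $k$-sets furnished by Lemma~\ref{le:xx+1}. Solving $p+2t=bn+1$, $p+t=nb-1$ gives $t=2$ triple-shared and $p=nb-3$ doubly-shared colors, all degrees equal to $a$, and union equal to $nb-1$; placing the two triples on pairs whose edge-multiplicity is still below $c^{*}$ keeps every pairwise intersection $\le c^{*}+1$. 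The delicate point throughout the upper bound is meeting the per-pair separation bound (not merely its aggregate $\sum_{\{i,j\}}|\Ip(\{i,j\})|\le\binom{n}{2}(c^{*}+1)$) together with integrality; once this is arranged, $\Sigma_{[n]}(L)=nb-1<nb$ shows $K_n$ is not $(a,b,c^{*}+1)$-choosable, giving $\sep(K_n,a,b)\le c^{*}$ and completing the proof.
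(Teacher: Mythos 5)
Your lower bound is the paper's argument almost verbatim: the greedy/Bonferroni estimate $\Sigma_S(L)\ge ia-\binom{i}{2}c^{*}$ combined with $c^{*}\le\frac{2(a-b)}{n-1}$ and $a\ge b$. For $a<2b$, your upper-bound construction is also essentially the paper's counter-example in multigraph language: the paper distributes the doubly-shared colors explicitly (all pair multiplicities equal to $c^{*}+1$ when $a\ge(n-1)(c^{*}+1)$, and otherwise multiplicities $c^{*}+1$ and $c^{*}$ arranged so that every vertex meets the same number of heavy pairs and $|\Ip(\{i\})|=0$), whereas you assert the existence of the required degree- and multiplicity-capped multigraph as ``routine''. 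That step does deserve one explicit sentence (for instance: put $c^{*}$ copies on every pair, which uses $\binom{n}{2}c^{*}\le n(a-b)$ edges and degree $(n-1)c^{*}\le 2(a-b)\le a$ at each vertex, then superimpose a near-regular simple graph carrying the remaining $p-\binom{n}{2}c^{*}\le\binom{n}{2}$ edges), but it is not a real obstacle in the range $a<2b$.

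The genuine gap is the boundary case $a=2b$, and your proposed repair cannot be completed because the statement itself fails there. Take $(n,a,b)=(4,2,1)$, so $c^{*}=\lfloor 2/3\rfloor=0$. Every $1$-separating $2$-list assignment of $K_4$ satisfies the amplitude condition: if the union of $k\in\{3,4\}$ of the $2$-element lists had size less than $k$, then all lists of $S$ would lie among the $2$-subsets of a set of size $k-1\le 3$, forcing two of them to coincide and hence to intersect in $2>1$ colors. So $K_4$ is $(2,1,1)$-choosable and $\sep(K_4,2,1)\ge 1>c^{*}$; no counter-example, with triples or otherwise, can exist. Concretely, the two $3$-sets furnished by Lemma~\ref{le:xx+1} with $k=2$ share the pair $\{1,2\}$ and therefore raise $|L(1)\cap L(2)|$ by $2$, which would require that pair to carry at most $c^{*}-1$ doubly-shared colors --- impossible when $c^{*}=0$. (The paper's own proof has the same blind spot: in its second case the total amplitude of the counter-example equals exactly $\tfrac{n}{2}a$, which is $<nb$ only for $a<2b$ and equals $nb$ at $a=2b$; the displayed simplification contains a sign slip, and the paper's own conjecture for $K_4$ places $a=2b$ in the next branch with a different formula.) The correct course is to prove the proposition for $b\le a<2b$ only; on that range your argument, like the paper's, is sound.
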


\begin{proof}
 Let $a,b$ such that $b\le a \le 2b$ and let $c=\lfloor\frac{2(a-b)}{n-1}\rfloor$.
 Consider a $c$-separating $a$-list assignment $L$ of $K_n$, for $n\ge 3$.
 
From the hypothesis, we have $(n-1)c\le 2(a-b) < a$.

By the separation condition, the amplitude of $L$ on vertices from $S\subset [n]$, $|S|=i$, satisfies \[\Sigma_{S}(L)\ge a + (a-c)+\ldots + (a-(i-1)c) = ia- \frac12 i(i-1)c.\]
Therefore, as $c\le 2(a-b)/(n-1)$, we obtain 
\[\Sigma_{S}(L)\ge ia-\frac12 i(i-1) \frac{2(a-b)}{n-1} = ia \frac{n-i}{n-1} + ib\frac{i-1}{n-1}\ge ib.\] 
 Hence the amplitude condition is satified for any $i$, $1\le i\le n$ and thus $K_n$ is $(L,b)$-colorable.
 
For proving the upper bound, we construct counter-examples to show that, for $c'=c+1$, there exists  $c'$-separating a $a$-list assignment $L'$ that do not satisfy the amplitude condition, hence for which no $(L',b)$-coloring exists. 
 
 First, if $a\ge (n-1)c'$, then $L'$ is the quasi-symetrical list assignment constructed by setting for any $i\in [n]$, $|\Ip(\{i\})|= a-(n-1)c'$ and for any $i,j\in[n], i\ne j$, $|\Ip(\{i,j\})|= c'$, the other proper intersections being empty.
 
Then, by Equation~\ref{eq1} and Corollary~\ref{corAmp}, we have
\[\Sigma_{[n]}(L')=\sum _{i\in[n]}|\Ip(\{i\})| + \sum_{i,j\in[n], i<j}|\Ip(\{i,j\})|\\
= na -n(n-1)c' + n\frac{n-1}{2}c' = na - n\frac{n-1}{2}c'.\]

As $c' = c+1 > 2(a-b)/(n-1)$, we obtain
\[\Sigma_{[n]}(L') < na - n\frac{n-1}{2}\frac{2(a-b)}{n-1} = nb.\]
Hence the amplitude condition is not satisfied and $K_n$ is not $(L',b)$-colorable.

Now, if $(n-1)c \le a < (n-1)c'$, then let $\alpha$ such that $a=(n-1)c'-\alpha$, with $1\le \alpha\le n-1$.
Then $L'$ is the quasi-symetrical list assignment constructed by setting for any $i\in [n]$, $|\Ip(\{i\})|= a-(n-1)c+\alpha$ and for any $i,j\in[n], i\ne j$, $|\Ip(\{i,j\})|= c'$ if $i< j\le i+\alpha$ and $|\Ip(\{i,j\})|= c'-1$ otherwise, the other proper intersections being empty.

It is easy to observe that each vertex has a list of $a-(n-1)c'+\alpha  + \alpha(c'-1) + (n-1-\alpha)c' = a$ colors and that the list assignment is $c'$-separating. But, the amplitude of the full list is
\[\Sigma_{[n]}(L')=\sum _{i\in[n]}|\Ip(\{i\})| + \sum_{i,j\in[n], i<j}|\Ip(\{i,j\})|\\
= na -n(n-1)c' +n\alpha +\frac n2 (c'-1)\alpha + \frac n2 (n-1-\alpha) c' \]
\[ = na - \frac n2 (n-1)c' -\frac n2\alpha.\]

As $c' = c+1 > 2(a-b)/(n-1)$, we obtain
\[\Sigma_{[n]}(L') < na - \frac n2 (n-1)\frac{2(a-b)}{n-1} -\frac n2\alpha = nb -\frac n2 \alpha.\]
Hence the amplitude condition is not satisfied and $K_n$ is not $(L',b)$-colorable.

\end{proof}

\begin{proposition}\label{propn-1bnb}
For any $n\ge 3$ and , $a,b$ such that $(n-1)b\le a \le nb$, we have $\sep(K_n,a,b)= 2a-nb$.
\end{proposition}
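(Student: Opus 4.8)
The plan is to prove the two inequalities $\sep(K_n,a,b)\le 2a-nb$ and $\sep(K_n,a,b)\ge 2a-nb$ separately, the first by an explicit counter-example and the second by verifying the amplitude condition~\eqref{eq1} (which, by~\cite{CGHHJ}, is sufficient for complete graphs).

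For the upper bound I would first assume $(n-1)b\le a<nb$ and build the symmetrical list assignment $L'$ with $|\Ip(S)|=nb-a-1$ for every $(n-1)$-subset $S\subset[n]$ and $|\Ip([n])|=n(a-(n-1)b)+(n-1)$, all other proper intersections being empty; both quantities are nonnegative in this range. Using the symmetrical identities~\eqref{eq1s=a}, \eqref{eq1s<c}, \eqref{eq2s>=nb}, a short computation shows that each list has size $(n-1)(nb-a-1)+|\Ip([n])|=a$, that every pairwise intersection equals $(n-2)(nb-a-1)+|\Ip([n])|=2a-nb+1$, and that the total amplitude is $\Sigma_{[n]}(L')=n(nb-a-1)+|\Ip([n])|=nb-1<nb$. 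Thus $L'$ is a $(2a-nb+1)$-separating $a$-list assignment violating the amplitude condition on $S=[n]$, so $K_n$ is not $(L',b)$-colorable and $\sep(K_n,a,b)\le 2a-nb$. The remaining case $a=nb$ is immediate, since there $2a-nb=a$ and $\sep(K_n,a,b)\le a$ always.

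For the lower bound, set $c=2a-nb$ and let $L$ be any $c$-separating $a$-list assignment; it suffices to show $\Sigma_S(L)\ge ib$ for every $S$ with $|S|=i$. Writing $A_j=L(v_j)$ and $d(x)=|\{j\in S:\ x\in A_j\}|$ for each color $x$, double counting gives $\sum_x d(x)=ia$, while the separation condition gives $\sum_x\binom{d(x)}{2}=\sum_{j<k}|A_j\cap A_k|\le\binom{i}{2}c$. The key step is a quadratic weighting: since $d(x)\in\{1,\dots,i\}\subseteq\{1,\dots,n\}$ and
\[n(n-1)-m(2n-1-m)=(n-m)(n-1-m)\ge 0\quad\text{for every integer }1\le m\le n,\]
each color satisfies $\tfrac{2}{n}d(x)-\tfrac{2}{n(n-1)}\binom{d(x)}{2}\le 1$. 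Summing over the colors of $\bigcup_{j\in S}A_j$ yields
\[\Sigma_S(L)\ge\frac{2}{n}\sum_x d(x)-\frac{2}{n(n-1)}\sum_x\binom{d(x)}{2}\ge\frac{2ia}{n}-\frac{i(i-1)c}{n(n-1)},\]
and this last quantity is $\ge ib$ precisely when $(n-1)(2a-nb)\ge(i-1)c$, i.e. when $(n-1)c\ge(i-1)c$, which holds for all $i\le n$ since $c>0$. Hence the amplitude condition holds for every $S$, and $K_n$ is $(L,b)$-colorable.

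The main obstacle is exactly this lower bound. The naive estimates one reaches for first — adding the lists one at a time (giving only $\Sigma_S\ge ia-\tfrac12 i(i-1)c$) or a Cauchy--Schwarz/convexity bound on the multiplicities — are genuinely too weak: for $n\ge 4$ they already fail at $i$ close to $n$ throughout the interior $(n-1)b<a<nb$, exactly where the amplitude condition is tight ($\Sigma_{[n]}=nb$). What saves the bound is the \emph{integrality} of the multiplicities $d(x)$, encoded by the inequality $m(2n-1-m)\le n(n-1)$, which is tight precisely at the two consecutive values $m=n-1$ and $m=n$ occurring in the extremal configuration. Identifying this weighting — equivalently, the correct dual certificate for the underlying integer program — is the crux; once it is available, both directions close with only routine algebra.
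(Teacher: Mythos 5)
Your proof is correct. The upper-bound half coincides exactly with the paper's: your assignment with $|\Ip(S)|=nb-a-1$ on each $(n-1)$-subset $S$ and $|\Ip([n])|=n(a-(n-1)b)+(n-1)$ is precisely the paper's counter-example (there written as $|\Ip([n]\setminus\{i\})|=a-c'$ and $|\Ip([n])|=(n-1)c'-(n-2)a$ with $c'=2a-nb+1$), and your size, separation and amplitude computations match; your separate treatment of $a=nb$ is a sensible extra precaution.

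The lower bound is where you genuinely diverge, and where you have worked much harder than necessary. The paper's argument is one line: for $c=2a-nb$, any \emph{two} lists already satisfy $|L(j)\cup L(k)|\ge a+(a-c)=nb$, and since the union over a set $S$ contains the union over any $2$-subset of $S$, monotonicity gives $\Sigma_S(L)\ge nb\ge |S|b$ for all $|S|\ge 2$ (and $\Sigma_S(L)=a\ge b$ for $|S|=1$). So your difficulty analysis is slightly off: the telescoping bound $\Sigma_S\ge ia-\tfrac12 i(i-1)c$ does indeed fail for $n\ge 4$, but the even more elementary observation above suffices here, so no quadratic weighting is needed for this proposition. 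That said, your argument is valid as written — I checked that $\sum_x d(x)=ia$, that $\sum_x\binom{d(x)}{2}\le\binom{i}{2}c$, that $n(n-1)-m(2n-1-m)=(n-m)(n-1-m)\ge 0$ for integers $1\le m\le n$, and that the resulting bound $\Sigma_S(L)\ge \frac{2ia}{n}-\frac{i(i-1)c}{n(n-1)}\ge ib$ reduces to $(n-1)c\ge (i-1)c$ — and it is a strictly stronger tool: it holds for arbitrary $c$-separating assignments and exploits the integrality of the multiplicities, so it could be useful in the intermediate range $2b\le a<(n-1)b$ where the paper only proves bounds with a gap of roughly a factor two. For the present statement, however, it buys nothing over the two-list observation.
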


\begin{proof}
 Let $a,b$ such that $(n-1)b\le a \le nb$ and  $c= 2a-nb$.
 Consider a $c$-separating $a$-list assignment $L$ of $K_n$, for $n\ge 3$. 
 
By the separation condition, the amplitude of $L$ on vertices from $S\subset [n]$, $|S|=2$, satisfies \[\Sigma_{S}(L)\ge a + (a-c)=2a-c=2a-2a+nb=nb.\]

Hence, for any $S\subset [n]$, $|S|\ge 1$, $\Sigma_{S}(L)\ge |S|b$ and the amplitude condition is satisfied.

Now, we construct a counter example for $c'=c+1= 2a-nb+1$ of a $c'$-separating $a$-list assignment $L'$ for which no $(L,b)$-coloring exists. Let $L'$ be constructed by setting for any $i\in [n]$, $|\Ip([n]\setminus \{i\})|= a-c'$ and  $|\Ip([n])|= (n-1)c'-(n-2)a$, the other proper intersections being empty. Note that $(n-1)c'-(n-2)a = (n-1)(2a-nb+1) -(n-2)a = a(2n-2-n+2) -(n-1)(nb-1) = n(a-(n-1)b)+n-1 \ge 0$ since $a\ge (n-1)b$ by the hypothesis.
Then, by Equation~\ref{eq1} and Corollary~\ref{corAmp}, we have
\[\Sigma_{[n]}(L')=n(a-c')+(n-1)c'-(n-2)a = 2a-c'=2a-2a+nb -1 = nb-1 < nb.\]

Thus the amplitude condition is not satisfied and $K_n$ is not $(L',b)$-colorable.

\end{proof}

Remark that Corollary~\ref{cor1} can also be deduced from Propositions~\ref{propb2b} and~\ref{propn-1bnb}.

Putting all the partial results together and the computations made, we propose the following conjecture:

\begin{conjecture}
for any $n\ge 4,a,b, p$ with $2 \le p\le n-2$ and $pb\le a < (p+1)b$, we have \[\sep(K_n,a,b)=\left\lceil \frac{2p a -p(p+1) b}{n-1}\right\rceil+\epsilon,\] with $\epsilon\in\{-1,0\}$.
\end{conjecture}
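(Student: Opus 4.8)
The plan is to squeeze $\sep(K_n,a,b)$ between two bounds that differ by at most one, which is exactly what forces $\epsilon\in\{-1,0\}$. Throughout write $p=\lfloor a/b\rfloor$, so that $pb\le a<(p+1)b$ as in the hypothesis, and set $c^\ast=\frac{2pa-p(p+1)b}{n-1}$, the quantity inside the ceiling. The engine of both bounds is a reduction of the amplitude condition to a two–constraint linear program over the \emph{size profile} of the proper intersections, together with the fact (Cropper et al.) that on a complete graph the amplitude condition is sufficient for an $(L,b)$-coloring.

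First I would recast the amplitude condition through proper intersections. For a subset $S\subset[n]$ with $|S|=i$, let $n_k=\sum_{T\subset S,\,|T|=k}|\Ip(T)|$ be the number of colors lying in a proper intersection of exactly $k$ of the lists indexed by $S$. The counting identities underlying Equations~\ref{eq=a} and \ref{eq<c} and Corollary~\ref{corAmp} give $\sum_k k\,n_k=ia$, $\sum_k \binom{k}{2}n_k=\sum_{j<j',\,j,j'\in S}|L(j)\cap L(j')|\le \binom{i}{2}c$, and $\Sigma_S(L)=\sum_k n_k$. Hence $\Sigma_S(L)$ is bounded below by the optimum $m(i,a,c)$ of the program ``minimize $\sum_k n_k$ subject to $\sum_k k\,n_k=ia$, $\sum_k\binom{k}{2}n_k\le\binom{i}{2}c$, $n_k\ge 0$''. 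Since an $(L,b)$-coloring of $K_n$ exists as soon as $\Sigma_S(L)\ge |S|b$ for all $S$, the whole choosability question reduces to proving $m(i,a,c)\ge ib$ for every $i$, $1\le i\le n$.

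Next I would solve the program. A duality argument shows the optimum is carried by at most two sizes, and the reduced-cost function $k\mapsto 1-yk-z\binom{k}{2}$ is an \emph{upward} parabola (the budget multiplier satisfies $z\le 0$), so it can vanish at two integers only if they are \emph{consecutive}; thus the optimal support is some $\{q,q+1\}$. At the amplitude threshold $\sum_k n_k=ib$ the mean size equals $\frac{ia}{ib}=a/b\in[p,p+1)$, forcing $q=p$, and the same elimination as in Proposition~\ref{propapb} yields the threshold separation $c_i=\frac{2pa-p(p+1)b}{i-1}$. As $c_i$ is a positive constant divided by $i-1$, it is strictly decreasing in $i$, so the binding subset is $S=[n]$ and $c_n=c^\ast$. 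This gives the lower bound at once: for any integer $c\le c^\ast$ and any $i$ we have $c\le c^\ast=c_n\le c_i$, and since $m(i,a,\cdot)$ is nonincreasing in $c$, $m(i,a,c)\ge m(i,a,c_i)=ib$; the amplitude condition holds for every $S$, whence $\sep(K_n,a,b)\ge\lfloor c^\ast\rfloor$, with equality already giving $\sep=c^\ast$ when $c^\ast$ is an integer.

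For the upper bound I would realize the extremal profile as an actual symmetrical assignment $L$ with $|\Ip(S)|=x_p$ for $|S|=p$ and $|\Ip(S)|=x_{p+1}$ for $|S|=p+1$, where $x_p,x_{p+1}$ solve $\binom{n-1}{p-1}x_p+\binom{n-1}{p}x_{p+1}=a$ and $\binom{n}{p}x_p+\binom{n}{p+1}x_{p+1}=nb$; this $L$ has separation exactly $c^\ast$. Applying Lemma~\ref{le:xx+1} to the sizes $p$ and $p+1$ lowers the amplitude to $nb-1$ while keeping every list of size $a$ and raising the separation by one, producing a $(\lceil c^\ast\rceil+1)$-separating assignment that violates the amplitude condition, hence $\sep(K_n,a,b)\le\lceil c^\ast\rceil$. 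Combined with the lower bound, $\sep(K_n,a,b)\in\{\lceil c^\ast\rceil-1,\lceil c^\ast\rceil\}$, which is the asserted form. The hard part — and the reason the statement remains a conjecture — is pinning down $\epsilon$, an integrality-gap question: when $c^\ast\notin\mathbb{Z}$ one must decide whether an \emph{integer} profile (not merely the fractional LP optimum) can already violate the condition at separation $\lceil c^\ast\rceil$, which in turn needs the perturbation of Lemma~\ref{le:xx+1} to be feasible for the relevant $p$ (the constraint $n\ge(2p-1+\sqrt{8p+1})/2$, which fails when $p$ is close to $n-2$) and the rounding of $x_p,x_{p+1}$ not to spoil either the exact list size $a$ or the amplitude deficit. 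These conditions depend delicately on $a,b,n,p$, so no single closed form for $\epsilon$ is expected, matching the $\{-1,0\}$ ambiguity.
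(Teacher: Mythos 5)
First, be aware that this statement is labelled a \emph{conjecture} in the paper: the authors give only heuristic motivation after it, so there is no proof of theirs to match and your argument has to stand entirely on its own.

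Your lower bound does stand, and it is in fact stronger than anything the paper proves in the range $2\le p\le n-2$. The two-constraint relaxation ($\sum_k k\,n_k=ia$ from the list sizes, $\sum_k\binom{k}{2}n_k\le\binom{i}{2}c$ from summing the separation condition over pairs, $\Sigma_S(L)=\sum_k n_k$ from Corollary~\ref{corAmp}) admits the explicit dual certificate $y=\tfrac{2}{p+1}$, $z=\tfrac{2}{p(p+1)}$, for which $yk-z\binom{k}{2}=\tfrac{k(2p+1-k)}{p(p+1)}\le 1$ for every integer $k$, with equality exactly at $k\in\{p,p+1\}$; this gives $\Sigma_S(L)\ge \tfrac{2ia}{p+1}-\tfrac{i(i-1)c}{p(p+1)}\ge ib$ whenever $c\le c^\ast$, hence $\sep(K_n,a,b)\ge\lfloor c^\ast\rfloor\ge\lceil c^\ast\rceil-1$ with no divisibility hypothesis. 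This subsumes Proposition~\ref{biKn} (which loses roughly a factor of two for large $p$) and, combined with Proposition~\ref{propapb}, already yields new exact values of $\sep(K_n,pb,b)$ under that proposition's divisibility hypothesis. I would extract that as a theorem.

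The genuine gap is the upper bound $\sep(K_n,a,b)\le\lceil c^\ast\rceil$, without which ``$\epsilon\in\{-1,0\}$'' is not proved; your fourth paragraph asserts the conclusion while your closing sentences concede the construction may fail, and the concession wins. Concretely: (i) the extremal symmetric profile has $x_p=p\bigl((p+1)b-a\bigr)/\binom{n-1}{p-1}$ and $x_{p+1}=(p+1)(a-pb)/\binom{n-1}{p}$, which are nonnegative integers only under divisibility conditions on $a,b$ — this is precisely why Proposition~\ref{propapb} carries such a hypothesis — and no rounding scheme is provided that preserves both the exact list size $a$ and an amplitude deficit while controlling the separation; (ii) when $c^\ast\notin\mathbb{Z}$ no integer profile has separation exactly $c^\ast$, so ``raise the separation by one'' does not land at $\lceil c^\ast\rceil+1$ and the whole perturbation bookkeeping must be redone; (iii) Lemma~\ref{le:xx+1} requires $n\ge(2k-1+\sqrt{8k+1})/2$, which already fails for $k=p=n-2$ once $n\ge 6$, and additionally needs $x_p\ge 1$ to be applicable. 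Each of (i)--(iii) blocks the counterexample in infinitely many cases $(n,a,b,p)$ covered by the conjecture, so the upper bound — and therefore the conjecture — remains open in your write-up.
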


Let us explain how we arrive to this conjecture: First, as the degree of every vertex in $K_n$ is $n-1$, then, for a balanced distribution of colors among vertices, they should be grouped by packets of $n-1$. Hence the separation number must be (the ceiling of) a certain function of $a,b,p$ divided by $n-1$. By Property~\ref{propp1}, this function must also be close to an affine function. Second, supported by Proposition~\ref{propapb}, we conjecture that when $a=pb$, then $\sep(K_n,a,b)=a(p-1)/(n-1)$. All this together lead us to propose the above conjecture, with $\epsilon$ being the correcting term depending on $a,b,n$.

In particular, for $n=4$, the following refinment is conjectured, where only the case $2b \le a < 3b$ remains to be verified:

\begin{conjecture}
\[\sep(K_4,a,b) = \left\{\begin{array}{ll}
\lfloor\frac{2(a-b)}{3}\rfloor, & b\le a< 2b\\
\lceil\frac{4a-6b-1}{3}\rceil, & 2b \le a < 3b\\
2a-4b, & 3b \le a < 4b\\
a, &  a \ge 4b. 
                            \end{array}
\right.\]
\end{conjecture}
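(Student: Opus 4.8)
The plan is to treat the four cases separately, noting that three of them are already settled so that the real content is the single range $2b\le a<3b$. For $b\le a<2b$ the value $\lfloor\frac{2(a-b)}{3}\rfloor$ is exactly Proposition~\ref{propb2b} with $n=4$; for $3b\le a<4b$ the value $2a-4b$ is Proposition~\ref{propn-1bnb} with $n=4$; and for $a\ge 4b=nb$ one has $\sep(K_4,a,b)=a$, because the amplitude condition~\eqref{eq1} then holds for \emph{every} list assignment (any $S$ gives $\Sigma_S(L)\ge a\ge 4b\ge|S|b$), so $K_4$ is $(L,b)$-colorable with no separation restriction. It then remains to prove $\sep(K_4,a,b)=c$ with $c=\lceil\frac{4a-6b-1}{3}\rceil$ for $2b\le a<3b$, which I would do by establishing the lower bound (that $K_4$ is $(a,b,c)$-choosable) and a matching $(c+1)$-separating counter-example.

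For the lower bound I would work with proper intersections, writing $x_S=|\Ip(S)|$, and use that by Cropper et al.~\cite{CGHHJ} $K_4$ is $(L,b)$-colorable iff $\Sigma_S(L)\ge|S|b$ for every $S\subseteq[4]$. I would first dispose of $|S|\le 3$: the greedy union bound $\Sigma_S(L)\ge|S|a-\binom{|S|}{2}c$ suffices as soon as $c\le a-b$, and a short computation gives $\lceil\frac{4a-6b-1}{3}\rceil\le a-b$ for all $a<3b$. The whole difficulty is concentrated in $S=[4]$, where $\sum_{S\ni i}x_S=a$ yields $\Sigma_{[4]}(L)=4a-\bigl(\sum_{|S|=2}x_S+2\sum_{|S|=3}x_S+3x_{[4]}\bigr)$, so the goal becomes $\sum_{|S|=2}x_S+2\sum_{|S|=3}x_S+3x_{[4]}\le 4(a-b)$. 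I would solve the continuous relaxation first: averaging over the $S_4$-action makes the extremal assignment symmetric, and the resulting two-variable linear program attains its maximum at the vertex supported on the $2$- and $3$-subsets (i.e.\ $\Ip(\{i\})=\Ip([4])=\emptyset$), which gives exactly the threshold $c\le\frac{4a-6b}{3}$.

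The remaining, genuinely delicate point is integrality, and I expect it to be the main obstacle. The continuous optimum sits at $|\Ip(S)|=\tfrac{2a}{3}-c$ on pairs and $|\Ip(S)|=c-\tfrac a3$ on triples, values not realizable by non-negative integers unless $3\mid a$; when $a\equiv 2\pmod 3$ the integer minimum of $\Sigma_{[4]}(L)$ is one unit larger than the continuous bound, which is precisely why the answer is $\lceil\frac{4a-6b-1}{3}\rceil=\lfloor\frac{4a-6b}{3}\rfloor+[a\equiv2\bmod 3]$ rather than $\lfloor\frac{4a-6b}{3}\rfloor$. The trouble is that the integer minimum need not be symmetric, so the averaging reduction no longer applies and one must argue directly on the integer proper-intersection vector, combining the six \emph{individual} separation inequalities $\sum_{S\supseteq\{i,j\}}x_S\le c$ with the four equalities $\sum_{S\ni i}x_S=a$ and $x_S\ge0$ — most cleanly by a case analysis on $\sum_{|S|=3}x_S$ and $x_{[4]}$ (as in the toy instance $a=2,b=1$, where these constraints together force $\Sigma_{[4]}(L)\ge 4$ although the continuous bound gives only $10/3$).

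For the upper bound I would exhibit a $(c+1)$-separating $a$-list assignment $L'$ violating the full-set amplitude condition, built from the same extremal pattern. When $3\mid a$ the symmetric assignment with $|\Ip(S)|=\frac{2a}{3}-(c+1)$ on pairs and $|\Ip(S)|=(c+1)-\frac a3$ on triples has list sizes $a$, separation $c+1$, and $\Sigma_{[4]}(L')\le 4b-1<4b$; for $a\not\equiv0\pmod 3$ I would replace it by a quasi-symmetric assignment, adjusting a few proper intersections by $\pm1$ exactly as in Proposition~\ref{propapb} via Lemma~\ref{le:xx+1}, so as to meet list size $a$ and separation $c+1$ while keeping the amplitude below $4b$. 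Near the ends of the interval the support of the optimal pattern shifts (towards $\Ip([4])$ as $a\to(3b)^-$, recovering the Proposition~\ref{propn-1bnb} construction, while the condition $\frac{2a}{3}-(c+1)\ge0$ must be checked as $a\to(2b)^+$), so the counter-examples come in a few sub-regimes whose boundaries I would have to pin down; matching these to the single closed formula $\lceil\frac{4a-6b-1}{3}\rceil$ is the bookkeeping that accompanies the integrality obstacle above.
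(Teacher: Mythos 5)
You are attempting to prove a statement that the paper itself presents only as a conjecture: the authors explicitly say that the case $2b\le a<3b$ ``remains to be verified'', so there is no proof in the paper to compare against, and what you have written is likewise a plan rather than a proof. Your reduction of three of the four ranges is correct in outline: $b\le a<2b$ is Proposition~\ref{propb2b} with $n=4$, $3b\le a<4b$ is Proposition~\ref{propn-1bnb}, and for $a\ge 4b$ the amplitude condition $\Sigma_S(L)\ge a\ge 4b\ge |S|b$ holds for every $a$-list assignment, so $\sep(K_4,a,b)=a$ by the Cropper et al.\ characterization. But there is a boundary clash you must not gloss over: Proposition~\ref{propb2b} is stated for $b\le a\le 2b$ and gives $\lfloor\frac{2b}{3}\rfloor$ at $a=2b$, while the conjecture's second line covers $a=2b$ and gives $\lceil\frac{2b-1}{3}\rceil$; these differ by one whenever $2b\equiv 2\pmod 3$ (e.g.\ $(a,b)=(8,4)$). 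This is not a cosmetic issue: in the second subcase of the proof of Proposition~\ref{propb2b} the simplification of $\Sigma_{[n]}(L')$ carries a sign slip, and the construction actually has amplitude $\frac{na}{2}$, which is strictly less than $nb$ only for $a<2b$ and equals $nb$ at $a=2b$. So the upper bound of that proposition is not established at $a=2b$ (and a direct check of the amplitude condition shows $\sep(K_4,8,4)\ge 3$, contradicting the value $2$ it would give). You cannot simply cite both results on the overlap; a complete proof has to decide which formula governs $a=2b$.

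The substantive gap is the range $2b\le a<3b$, where everything after your continuous LP analysis is a statement of intent. The symmetrization argument and the identity $\Sigma_{[4]}(L)=4a-\bigl(\sum_{|S|=2}x_S+2\sum_{|S|=3}x_S+3x_{[4]}\bigr)$ are correct and do identify the continuous threshold $\frac{4a-6b}{3}$, and you correctly locate the whole difficulty in the integrality correction when $a\equiv 2\pmod 3$ (which is exactly where the $-1$ inside the ceiling comes from). But the integer argument itself --- showing that the six separation inequalities $\sum_{S\supseteq\{i,j\}}x_S\le c$, the four equalities $\sum_{S\ni i}x_S=a$ and non-negativity force $\Sigma_{[4]}(L)\ge 4b$ even though the continuous minimum dips below $4b$ --- is deferred to an unspecified case analysis and never carried out; since symmetrization is unavailable here, this is the entire content of the lower bound. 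On the upper-bound side, the symmetric counter-example exists only when $3\mid a$ and both $\frac{2a}{3}-(c+1)\ge 0$ and $(c+1)-\frac{a}{3}\ge 0$; for the other residues and near the endpoints of the interval you concede that the constructions split into sub-regimes ``whose boundaries I would have to pin down''. Until both halves are executed, neither inequality is proved for any $a$ in $[2b,3b)$, and the statement remains exactly what the paper calls it: a conjecture.
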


\appendix

\section{Vectorial solution of the ILP problem for symetrical list assignments}\label{app}
In this section, we show how the counter examples can be found using an algebraic formulation.

Let $\mathbf{e}_1, \ldots, \mathbf{e}_n$ be a canonical base of the vectorial space of dimension $n$.

We define the vector  
\[\mathbf{a}=\sum_{i=1}^{n} \dbinom{n-1}{i-1}\mathbf{e}_i.\]


We can remark that $\mathbf{a}$ is symetrical.
For a vector $\mathbf{x}\in \mathbb{R}^n$, we define the linear application $\varphi_a(\mathbf{x})=\mathbf{a}\times\mathbf{x}$. 

We also define the antisymetrical vectors $\forall i\in[1,\lfloor\frac n2\rfloor]$,

$\mathbf{as}(i) = \mathbf{e_i}-\mathbf{e}_{n+1-i}$ and, $\forall i\in[3,\lceil\frac n2\rceil]$, the Pascal's triangle vectors 

$\mathbf{tp}(i) = \mathbf{e}_i+\mathbf{e}_{i-1}-\binom{n}{i-1}\mathbf{e}_{1}$.

Lastly, we define the binomial vector $\mathbf{bn} = \sum_{i=1}^n \mathbf{e}_i - 2 ^{n-1} \mathbf{e}_1$.

\begin{lemma}
 The kernel $\ker{(\varphi_a)}$ has $\{ \{\mathbf{as}(i)\}, \{\mathbf{tp}(i)\}, \mathbf{bn}\}$ as basis and is thus of dimension $n-1$.
\end{lemma}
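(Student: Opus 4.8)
The plan is to use that $\varphi_a$ is a single nonzero linear form. Since $\mathbf{a}=\sum_{i=1}^n\binom{n-1}{i-1}\mathbf{e}_i\neq\mathbf{0}$, the map $\varphi_a\colon\mathbb{R}^n\to\mathbb{R}$, $\mathbf{x}\mapsto\mathbf{a}\cdot\mathbf{x}$, has rank $1$, so $\dim\ker(\varphi_a)=n-1$ by rank–nullity. The proposed family consists of $\lfloor n/2\rfloor$ vectors $\mathbf{as}(i)$, of $\lceil n/2\rceil-2$ vectors $\mathbf{tp}(i)$, and of the single vector $\mathbf{bn}$, hence of $\lfloor n/2\rfloor+\lceil n/2\rceil-2+1=n-1$ vectors in total. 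Because this matches the dimension, it suffices to prove (i) that every vector of the family lies in $\ker(\varphi_a)$, and (ii) that the family is linearly independent.

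Step (i) is just three evaluations. For $\mathbf{as}(i)$ we get $\varphi_a(\mathbf{as}(i))=\binom{n-1}{i-1}-\binom{n-1}{n-i}=0$ by the symmetry $\binom{n-1}{i-1}=\binom{n-1}{(n-1)-(i-1)}$; for $\mathbf{tp}(i)$ we get $\varphi_a(\mathbf{tp}(i))=\binom{n-1}{i-1}+\binom{n-1}{i-2}-\binom{n}{i-1}=0$ by Pascal's rule together with $\binom{n-1}{0}=1$; and for $\mathbf{bn}$ we get $\varphi_a(\mathbf{bn})=\sum_{i=1}^n\binom{n-1}{i-1}-2^{n-1}=0$ since $\sum_{k=0}^{n-1}\binom{n-1}{k}=2^{n-1}$. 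These are the only facts membership needs.

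The real work, and the \emph{main obstacle}, is (ii). I would write a vanishing combination $\sum_i\lambda_i\mathbf{as}(i)+\sum_i\mu_i\mathbf{tp}(i)+\nu\,\mathbf{bn}=\mathbf{0}$ and read it coordinate by coordinate. The structural key is that every $\mathbf{tp}(i)$ is supported on coordinates $1,\dots,\lceil n/2\rceil$, so each ``upper'' coordinate $n+1-i$ with $1\le i\le\lfloor n/2\rfloor$ (all of which exceed $\lceil n/2\rceil$) is hit only by $\mathbf{as}(i)$ and by $\mathbf{bn}$; the corresponding equation $-\lambda_i+\nu=0$ yields $\lambda_i=\nu$ for every $i$. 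Substituting this and then scanning the coordinates $\lceil n/2\rceil,\lceil n/2\rceil-1,\dots,2$ from the top down, each such coordinate involves $\nu$ and at most two consecutive $\mu$'s (since $\mathbf{tp}(i)$ and $\mathbf{tp}(i+1)$ overlap in exactly one coordinate). This gives a triangular recurrence that expresses each $\mu_i$ as an integer multiple of $\nu$. Note also that the first-coordinate equation is redundant: as $\mathbf{w}$ is a combination of kernel vectors, $\varphi_a(\mathbf{w})=0$ holds automatically and, since $a_1=1$, determines $w_1$ from $w_2,\dots,w_n$; so the binding constraints are exactly coordinates $2,\dots,n$.

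I expect the closing of this recurrence to be the crux. Once the $\mu_i$ are pinned as multiples of $\nu$, one still needs the lowest equation (coordinate $2$) to read $(\text{nonzero constant})\cdot\nu=0$ in order to conclude $\nu=0$, after which $\lambda_i=\mu_i=0$ follows at once. The leading constant here depends on how the alternating recurrence for the $\mu_i$ terminates, which is sensitive to the parity of $n$ and of $\lceil n/2\rceil$; I would therefore split into $n$ even and $n$ odd and, in the even case, check carefully whether coordinate $2$ survives as an independent equation or collapses to an identity. That verification is precisely the point at which one must confirm that $\mathbf{bn}$ is not itself a linear combination of the $\mathbf{as}(i)$ and $\mathbf{tp}(i)$; establishing this is what turns the counted family into an honest basis of $\ker(\varphi_a)$.
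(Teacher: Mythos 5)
Your verification that each listed vector lies in $\ker(\varphi_a)$ (via the symmetry $\binom{n-1}{i-1}=\binom{n-1}{n-i}$, Pascal's rule, and $\sum_k\binom{n-1}{k}=2^{n-1}$), the count of $n-1$ vectors, and the reduction to linear independence are all correct and match the paper's strategy, as does your coordinate-scan plan: the coordinates above $\lceil n/2\rceil$ force $\lambda_i=\nu$ for all $i$, and the remaining coordinates give a two-term recurrence for the $\mu_i$. The gap is that you stop at the exact step you yourself flag as the crux — whether coordinate $2$ ``survives as an independent equation or collapses to an identity'' — and you never resolve it. When you do carry it out for $n$ even, coordinate $n/2$ gives $\mu_{n/2}=-2\nu$, coordinates $k=n/2-1,\dots,3$ give $\mu_k+\mu_{k+1}=-2\nu$, hence the alternating pattern $\mu_{n/2}=-2\nu$, $\mu_{n/2-1}=0$, $\mu_{n/2-2}=-2\nu,\dots$, while coordinate $2$ demands $\mu_3=-2\nu$. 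This forces $\nu=0$ precisely when $n/2$ is even; when $n/2$ is odd (i.e.\ $n\equiv 2\pmod 4$) coordinate $2$ collapses to an identity and nothing forces $\nu=0$. The family is then genuinely dependent: for $n=6$ one checks $\mathbf{as}(1)+\mathbf{as}(2)+\mathbf{as}(3)-2\,\mathbf{tp}(3)+\mathbf{bn}=\mathbf{0}$ (coordinate $1$ reads $1+2\cdot 15-31=0$, and the other coordinates cancel in pairs), so the five listed vectors cannot be a basis of the five-dimensional kernel. Hence the independence step does not merely need more care — it is false for $n\equiv 2\pmod 4$, and the lemma as stated fails there (it does hold for $n$ odd and for $n\equiv 0\pmod 4$).

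For comparison, the paper's proof runs the same coordinate scan but commits exactly the error you were wary of: from $\alpha_k+\beta_k+\beta_{k+1}+\gamma=0$ it concludes $\beta_4=\cdots=\beta_{n/2}=0$, which does not follow (only the even-indexed $\beta$'s vanish; the odd-indexed ones equal $-2\gamma$), and its final contradiction at $k=n/2$ therefore only materializes when $n/2$ is even. So your instinct about where the danger lies, and that one must confirm $\mathbf{bn}$ is not a combination of the $\mathbf{as}(i)$ and $\mathbf{tp}(i)$, was exactly right; a complete argument must either restrict $n$ or modify one of the vectors in the residue class $n\equiv 2\pmod 4$.
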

\begin{proof}
 By construction, it is easy to observe that $F_a= \{ \{\mathbf{as}(i)\}, \{\mathbf{tp}_i\}, \mathbf{bn}\}\subset \ker{(\varphi_a)}$. As the number of elements of $F_a$ is equal to the dimension of $\ker{(\varphi_a)}$, it suffices to show that $F_a$ is a family of free vectors. We consider two cases depending on the parity of $n$.
 
 \begin{itemize}
  \item Case $n$ even. Let $\alpha_i$, $1\le i\le \frac n2$ and $\beta_j$, $3\le j\le \frac n2$ and $\gamma\in \mathbb{R}$ such that 
  
  \[\sum_{i=1}^{n/2} \alpha_i \mathbf{as}(i) + \sum_{j=3}^{n/2} \beta_j \mathbf{tp}(j) + \gamma \mathbf{bn} = \mathbf{0}.\]
  
  For the $k$-th coordinate of the left side of the above equality, with $k\in[\frac n2 +1, n]$, we have  $-\alpha_{n+1-k} + \gamma = 0$, thus $\gamma = \alpha_1=\alpha_2=\ldots = \alpha_{n/2}$.
  
  For $k=2$, $\alpha_2 + \beta_3 + \gamma = 0 \Rightarrow \beta_3 =-2\gamma$.
  
  For $k\in[3,\frac n2 -1]$, $\alpha_{k} + \beta_k + \beta_{k+1}+\gamma = 0 \Rightarrow \beta_4=\ldots = \beta_{n/2} = 0$.
  
  For $k=n/2$, $\alpha_k + \beta_k+\gamma = 0 \Rightarrow \alpha_{n/2} = -\gamma$. But we have seen that $\alpha_{n/2}=\gamma$, hence $\gamma=0$.

  
  Consequently, we obtain that all coefficients are equal to zero.
  
  \item Case $n$ odd.
  Let $\alpha_i$, $1\le i\le \frac{n-1}{2}$ and $\beta_j$, $3\le j\le \frac{n+1}{2}$ and $\gamma\in \mathbb{R}$ such that 
  
  \[\sum_{i=2}^{(n-1)/2} \alpha_i \mathbf{as}(i) + \sum_{j=3}^{(n+1)/2} \beta_j \mathbf{tp}(j) + \gamma \mathbf{bn} = \mathbf{0}.\]
  
  For the $k-th$ coordinate of the left side of the above equality, $k\in[\frac{n+3}{2}, n-1]$, we have $-\alpha_{n+1-k} + \gamma = 0$. Thus $\gamma = \alpha_1=\ldots = \alpha_{(n-1)/2}$.
  
  For $k=(n+1)/2$, $\beta_k+\gamma = 0 \Rightarrow \beta_{(n+1)/2} = -\gamma$.
  
  For $k\in[3,\frac{n-1}{2} -1]$, $\alpha_{k} + \beta_k + \beta_{k+1}+\gamma = 0$. Hence $\beta_k = -\alpha_k = -\gamma$.

  Finally, for $k=2$, $\alpha_2 + \beta_3 + \gamma = 0 \Rightarrow \gamma=0$.
 \end{itemize}
\end{proof}

Similarly, for the separation condition, we define the vector \[\mathbf{c}=\sum_{i=2}^n \binom{n-2}{i-2} \mathbf{e}_i\] and the linear application $\varphi_c(\mathbf{x}) = \mathbf{c}\times\mathbf{x}$. 

We also define the vectors 
\begin{itemize}
 \item $\forall i\in[2,\lfloor\frac n2\rfloor]$,
$\mathbf{asc}(i) = \Delta(i,n)\mathbf{as}(1) + \mathbf{as}(i)$,\\ with $\Delta(i,n)=\binom{n-2}{i-2} -\binom{n-2}{n-1-i} = \binom{n-2}{i-2} -\binom{n-2}{i-1}$.
\item $\forall i\in[3,\lceil\frac n2\rceil]$, $\mathbf{tpc}(i) = \binom{n-1}{i-2}\mathbf{as}(1) + \mathbf{tp}(i)$.
\item $\mathbf{bnc} = 2^{n-2}\mathbf{as}(1) + \mathbf{bn}$.
\end{itemize}

\begin{lemma}
 The intersection of kernels $\ker{(\varphi_a)}\cap \ker{(\varphi_c)}$ has  $\{ \{\mathbf{asc}(i)\}, \{\mathbf{tpc}(i)\}, \mathbf{bnc}\}$ as a basis and is thus of dimension $n-2$.
\end{lemma}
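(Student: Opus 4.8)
The plan is to regard the intersection $\ker(\varphi_a)\cap\ker(\varphi_c)$ as the kernel of $\varphi_c$ restricted to the $(n-1)$-dimensional space $\ker(\varphi_a)$ produced by the previous lemma. If this restriction is not identically zero, its kernel has dimension exactly $(n-1)-1=n-2$, which pins down the dimension. To see the restriction is nonzero, I would compute $\varphi_c(\mathbf{as}(1))$: since $\mathbf{as}(1)=\mathbf{e}_1-\mathbf{e}_n$, the first coordinate of $\mathbf{c}$ vanishes, and its $n$-th coordinate equals $\binom{n-2}{n-2}=1$, we get $\varphi_c(\mathbf{as}(1))=-1\neq 0$. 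As $\mathbf{as}(1)\in\ker(\varphi_a)$, this already gives $\dim\big(\ker(\varphi_a)\cap\ker(\varphi_c)\big)=n-2$.

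Next I would check that each proposed generator lies in the intersection. Membership in $\ker(\varphi_a)$ is immediate, since $\mathbf{asc}(i)$, $\mathbf{tpc}(i)$ and $\mathbf{bnc}$ are, by definition, combinations of $\mathbf{as}(1)$ with the corresponding basis vector of $\ker(\varphi_a)$, all of which lie in $\ker(\varphi_a)$. The point of the construction is that the coefficient of $\mathbf{as}(1)$ is chosen exactly to cancel the $\varphi_c$-value of its partner vector. Using that the $i$-th coordinate of $\mathbf{c}$ is $\binom{n-2}{i-2}$ for $i\geq 2$ and $0$ for $i=1$, together with $\binom{n-2}{n-1-i}=\binom{n-2}{i-1}$, one computes $\varphi_c(\mathbf{as}(i))=\Delta(i,n)$; Pascal's rule yields $\varphi_c(\mathbf{tp}(i))=\binom{n-2}{i-2}+\binom{n-2}{i-3}=\binom{n-1}{i-2}$; and the binomial sum $\sum_{i=2}^{n}\binom{n-2}{i-2}=2^{n-2}$ gives $\varphi_c(\mathbf{bn})=2^{n-2}$. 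Since $\varphi_c(\mathbf{as}(1))=-1$, combining each of these with the prescribed coefficient shows $\varphi_c(\mathbf{asc}(i))=\varphi_c(\mathbf{tpc}(i))=\varphi_c(\mathbf{bnc})=0$, as needed.

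Finally I would prove linear independence. A direct count gives exactly $n-2$ generators, matching the dimension, so it suffices to show they are free. Suppose a linear combination of the $\mathbf{asc}(i)$, $\mathbf{tpc}(i)$ and $\mathbf{bnc}$ is zero. Substituting the definitions and collecting terms, the combination rewrites as some scalar times $\mathbf{as}(1)$ plus the same combination of $\mathbf{as}(i)$, $\mathbf{tp}(i)$ and $\mathbf{bn}$. These latter vectors, together with $\mathbf{as}(1)$, form the basis of $\ker(\varphi_a)$ from the previous lemma, hence are linearly independent; so every coefficient of $\mathbf{as}(i)$, $\mathbf{tp}(i)$ and $\mathbf{bn}$ vanishes, which forces all coefficients in the original combination to be zero. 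This yields independence, and together with the dimension count it shows the family is a basis.

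I expect no genuine obstacle here: the argument is a clean shearing of the previous basis along $\mathbf{as}(1)$, and the only delicate point is the bookkeeping confirming that the coefficient attached to $\mathbf{as}(1)$ in each generator equals the $\varphi_c$-value of its partner, which reduces to the three binomial identities above.
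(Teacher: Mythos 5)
Your proof is correct, and it takes a genuinely different route from the paper's. The paper simply asserts that $\dim\bigl(\ker(\varphi_a)\cap\ker(\varphi_c)\bigr)=n-2$ and then establishes linear independence of the $n-2$ candidate vectors by a coordinate-by-coordinate analysis split into two cases according to the parity of $n$ (mirroring the structure of the proof of the preceding lemma). You instead exploit the structure of the construction: you justify the dimension count by observing that $\varphi_c$ restricted to the $(n-1)$-dimensional space $\ker(\varphi_a)$ is a nonzero linear form (since $\varphi_c(\mathbf{as}(1))=-1$), and you obtain freeness by rewriting any vanishing combination of $\mathbf{asc}(i)$, $\mathbf{tpc}(i)$, $\mathbf{bnc}$ as a combination of the basis vectors of $\ker(\varphi_a)$ from the previous lemma, in which the original coefficients reappear verbatim on $\mathbf{as}(i)$ ($i\ge 2$), $\mathbf{tp}(i)$ and $\mathbf{bn}$. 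Your binomial verifications ($\varphi_c(\mathbf{as}(i))=\Delta(i,n)$, $\varphi_c(\mathbf{tp}(i))=\binom{n-1}{i-2}$, $\varphi_c(\mathbf{bn})=2^{n-2}$) all check out against the definitions. What your approach buys is a uniform argument with no parity case split, it makes explicit why the dimension is $n-2$ (a point the paper leaves unargued), and it explains conceptually why the coefficients $\Delta(i,n)$, $\binom{n-1}{i-2}$ and $2^{n-2}$ appear in the definitions of $\mathbf{asc}$, $\mathbf{tpc}$ and $\mathbf{bnc}$; the paper's approach is more self-contained in that it does not reuse the independence statement of the previous lemma, only the coordinates of the vectors themselves.
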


\begin{proof}
 It is easy to see that $F_{ac}=\{ \{\mathbf{asc}(i)\}, \{\mathbf{tpc}(i)\}, \mathbf{bnc}\} \subset \ker{(\varphi_a)}\cap \ker{(\varphi_c)}$ and has $n-2$ vectors. Since the dimension of $\ker{(\varphi_a)}\cap \ker{(\varphi_c)}$ is equal to $n-2$ too, it suffices to show that $F_{ac}$ is a free family.
 We consider two cases depending on the parity of $n$.
 
 \begin{itemize}
  \item Case $n$ even. Let $\alpha_i$, $2\le i\le \frac n2$ and $\beta_j$, $3\le j\le \frac n2$ and $\gamma\in \mathbb{R}$ such that 
  
  \[\sum_{i=2}^{n/2} \alpha_i \mathbf{asc}(i) + \sum_{j=3}^{n/2} \beta_j \mathbf{tpc}(j) + \gamma \mathbf{bnc} = \mathbf{0}.\]
  
  For $k\in[\frac n2 +1, n-1]$, $-\alpha_{n+1-k} + \gamma = 0 \Rightarrow \gamma = \alpha_2=\alpha_3=\ldots = \alpha_{n/2 -1}=\alpha_{n/2}$.
  
  For $k=n/2$, $\alpha_k + \beta_k+\gamma = 0 \Rightarrow \beta_{n/2} = -2\gamma$.
  
  For $k\in[3,\frac n2 -1]$, $\alpha_{k} + \beta_k + \beta_{k+1}+\gamma = 0 \Rightarrow \beta_3 = \ldots = \beta_{n/2 -1} = 0$.
  
  Finally, for $k=2$, $\alpha_2 + \beta_3 + \gamma = 0 \Rightarrow 2\gamma=0 \Rightarrow \gamma=0$.
  
  \item Case $n$ odd.
  Let $\alpha_i$, $2\le i\le \frac{n+1}{2}$ and $\beta_j$, $3\le j\le \frac{n+1}{2}$ and $\gamma\in \mathbb{R}$ such that 
  
  \[\sum_{i=2}^{(n+1)/2} \alpha_i \mathbf{asc}(i) + \sum_{j=3}^{(n+1)/2} \beta_j \mathbf{tpc}(j) + \gamma \mathbf{bnc} = \mathbf{0}.\]
  
  For $k\in[\frac{n+3}{2}, n-1]$, $-\alpha_{n+1-k} + \gamma = 0 \Rightarrow \gamma = \alpha_2=\alpha_3=\ldots = \alpha_{(n-1)/2}$.
  
  For $k=(n+1)/2$, $-\alpha_{(n-1)/2}+ \beta_k+\gamma = 0 \Rightarrow \beta_{(n+1)/2} = 0$.
  
  For $k\in[3,\frac{n-1}{2} -1]$, $\alpha_{k} + \beta_k + \beta_{k+1}\gamma = 0 \Rightarrow \beta_{(n-1)/2} = 2\gamma, \beta_{(n-3)/2}=0, \beta_{(n-5)/2}=2\gamma,\ldots$.
  
  Finally, for $k=2$, $\alpha_2 + \beta_3 + \gamma = 0 \Rightarrow \beta_3 = -2\gamma$.
  If $(n+1)/2 \equiv 3 \bmod{4}$ then $\gamma=0$. Otherwise ($(n+1)/2 \not \equiv 3 \bmod{4}$), then $2\gamma=-2\gamma$ and thus $\gamma=0$.
  
 \end{itemize}
In both cases, we have shown that all coefficients are equal to zero, proving that the set of vectors forms a free family.
\end{proof}

We define $\mathbf{\psi} = \sum_{i=1}^n \binom{n}{i} \mathbf{e}_i$ which is the measure of the amplitude and $E(a,c)=\{\mathbf{x} \text{ such that } \varphi_a (\mathbf{x})=a \text{ and } \varphi_c(\mathbf{x})=c\}$.

Now, using the maximum principle, we define the two optimal solution vectors $\mathbf{x_i}$, $i=1, 2$. 
\begin{itemize}
 \item If $a\ge (n-1)c$, we define $\mathbf{x_1} = (a-(n-1)c)\mathbf{e}_1 + c\mathbf{e}_2$.

We can observe that $\mathbf{x}_1\in E(a,c)$. In fact $E(a,c) = \mathbf{x}_1 + ker(\varphi_a) \cap \ker({\varphi_c})$. This solution corresponds to the counter-example used in proof of Proposition~\ref{propb2b}.

\item If $ a\le \frac{n-1}{n-2}c$, $\mathbf{x_2} = (a-c)\mathbf{e}_{n-1} + ((n-1)c-(n-2)a)\mathbf{e}_n$.

We can observe that $\mathbf{x}_2\in E(a,c)$. In fact $E(a,c) = \mathbf{x}_2 + ker(\varphi_a) \cap \ker({\varphi_c})$. This solution corresponds to the counter-example used in proof of Proposition~\ref{propn-1bnb}.

\end{itemize}

\end{document}